\documentclass[12pt]{article}

\usepackage{amstext}
\usepackage{amsthm}
\usepackage{amsmath}
\usepackage{amssymb}
\usepackage{latexsym}
\usepackage{amsfonts}
\usepackage{color}
\usepackage{graphicx}
\usepackage{yfonts}
\usepackage{verbatim}
\usepackage{tikz}
\usepackage{tikz-cd}  
\usepackage{yfonts}

\AtBeginDocument{%
 \def\MR#1{} 
}

\usepackage[pagebackref,hypertexnames=false, colorlinks, citecolor=black, linkcolor=black, urlcolor=blue]{hyperref}
\usepackage[backrefs]{amsrefs}

\input def_latex4

\theoremstyle{plain}
\newtheorem{theorem}{Theorem}[section]
\newtheorem{lemma}[theorem]{Lemma}
\newtheorem{proposition}[theorem]{Proposition}
\newtheorem{corollary}[theorem]{Corollary}
\theoremstyle{definition}
\newtheorem{definition}[theorem]{Definition}
\newtheorem{question}[theorem]{Question}

\newtheorem{example}[theorem]{Example}

\newcommand\mn{{\mathbb M}_n}


\renewcommand\O{\Omega}

\def\mn{{\mathbb M}_n}
\def\mnd{\mn^d}
\def\mmd{{\mathbb M}_m^d}

\def\md{{\mathbb M}^{[d]}}

\newcommand{\bsbm}{\left[ \begin{smallmatrix}}
\newcommand{\esbm}{\end{smallmatrix} \right]}

\newcommand{\bbm}{ \begin{bmatrix}}
\newcommand{\ebm}{\end{bmatrix} }

\newcommand{\bpm}{\begin{pmatrix}}
\newcommand{\epm}{\end{pmatrix}}

\newcommand{\bspm}{\left(\begin{smallmatrix}}
\newcommand{\espm}{\end{smallmatrix}\right)}

\newcommand{\bsm}{\begin{smallmatrix}}
\newcommand{\esm}{\end{smallmatrix}}

\newcommand{\bal}{\begin{align*}}
\newcommand{\eal}{\end{align*}}

\newcommand\bh{B(\h)}
\newcommand{\tensor}[2]{\text{ }{\begin{smallmatrix} #1 \\ \otimes\\ #2\end{smallmatrix}}\text{  }}

\newcommand\wF{\widetilde{F}}
\newcommand\wO{\widetilde{\Omega}}
\newcommand\wsO{\widehat{\Omega}}
\newcommand\tna{{\mathcal T}_n(\A)}
\newcommand\np{{\mathbb N}^+}

\newcommand{\cdv}{{b}} 

\title{ Operator NC functions}
\author{Meric Augat
\and
John E. M\raise.5ex\hbox{c}Carthy
\thanks{Partially supported by National Science Foundation Grant  
DMS 2054199}
}
\date{}

\begin{document}
\maketitle

\begin{abstract}
	We establish a theory of NC functions on a class of von Neumann algebras with a particular direct sum property, e.g. $B(\h)$.
	In contrast to the theory's origins, we do not rely on appealing to results from the matricial case.
	We prove that the $k^{\mathrm{th}}$ directional derivative of any NC function at a scalar point is a $k$-linear homogeneous polynomial in 
	its directions.
	Consequences include the fact that NC functions defined on domains containing scalar points can be uniformly approximated by free 
	polynomials as well as realization formulas for NC functions bounded on particular sets, e.g. the non-commutative polydisk and 
	non-commutative row ball.
\end{abstract}

\section{Introduction}

Non-commutative function theory, as first proposed in the seminal work of J.L. Taylor \cites{tay72, tay73} and 
 developed for example in the monograph 
\cite{kvv14} by Kaliuzhnyi-Verbovetskyi and Vinnikov, is a matricial theory, that is a theory of functions of $d$-tuples of matrices.
Let $\mn$ denote the $n$-by-$n$ square matrices, and 
let \[
\md \ := \ \cup_{n=1}^\infty \mnd . 
\] 
A non-commutative function $f$ defined on a domain $\Omega$ in $\md$ is a function that satisfies the following two properties.

(i) The function is graded: if $x \in \mn^d$ then $f(x) \in \mn$.

(ii) It preserves intertwining: if $L : \C^m \to \C^n$ is linear, $x = (x^1, \dots, x^d) \in \mmd$
and $y = (y^1, \dots, y^d)$ are both in $\O$ and $Lx = yL$ (this means $L x^r = y^r L$ for each $1 \leq r \leq d$), then
$Lf(x) = f(y) L $.

The theory has been very successful, and can be thought of as extending free 
polynomials in $d$ variables to non-commutative holomorphic functions.
However, the negative answer to Connes's embedding conjecture \cite{Ji2020mipre} shows evaluating non-commutative polynomials on tuples of matrices is not sufficient to fully capture certain types of information, e.g. trace positivity of a free polynomial evaluated on tuples of self-adjoint contractions \cite{KlSchw08}.
Thus, there is an incentive to understand non-commutative functions applied not to matrices, but to operators on an infinite dimensional 
Hilbert space $\h$.
Accordingly, it seems natural to exploit the fact that there are (non-canonical) identifications of a matrix of operators with an 
individual operator, and so one is led to to consider functions that map elements of $B(\h)^d$ to $B(\h)$ and preserve intertwining. 

Such functions were studied in 
\cite{amip15}
and \cite{man20}. A key assumption in those papers, however, was that the function was also sequentially continuous in the strong operator topology. This assumption was needed in order to prove that the derivatives at $0$ were actually free polynomials, by invoking this property from the matricial theory and using the density of finite rank operators in the strong operator topology. The main purpose of this note is to develop a theory of non-commutative functions of operator tuples that
does not depend on the matricial theory.


For the rest of this paper, the following will be fixed.
We shall let  $\h$ be an infinite-dimensional Hilbert space. Let $\A$ be a unital sub-algebra of $B(\h)$ that is closed in the norm topology. Let $\tna$ denote the upper triangular $n$-by-$n$ matrices with entries from 
$\A$. We shall assume that $\A$ has the following direct sum property:
\be
\label{eqa1}
\forall n \geq 1, 
 \ \exists\,  U_n : \oplus_{j=1}^n \h \to \h, \ {\rm unitary,\ with\ } \\
 U_n ( \tna)  U_n^* \subseteq \A.
\ee
Examples of such an $\A$ include $\bh$; the upper triangular matrices in $\bh$ with respect to a fixed basis; and any von Neumann algebra that can be written as a tensor product of a $I_\infty$ factor with something else.

We shall let $d$ be a positive integer, and it will denote the number of variables.
For a $d$-tuple $x \in \A^d$, we shall write its coordinates with superscripts: $x = (x^1, \dots, x^d)$.  We shall toplogize $\A^d$ with the 
relative norm topology from $\bh^d$.

\begin{definition}
\label{defram1}
A set $\O \subseteq \A^d$ is called an {\em  NC  domain} if
it is open and bounded, and closed with respect to finite direct sums in the following sense:
for each $n \geq 2$ there exists a unitary  $U_n : \h^{(n)} \to \h$ so that whenever
  $x_1, \dots , x_n \in \O$,
then 
\be
\label{eqram31}
U_n \ 
\begin{bmatrix}
x_1 & 0& \cdots & 0 \\
0 & x_2 & \cdots & 0 \\
 &  & \ddots \\
0 & 0 & \cdots & x_n
\end{bmatrix}
U_n^* \ \in \ \O .
\ee
\end{definition}

\begin{example}
\label{exama1}
{\rm
The prototypical examples of NC domains are balls. The reader is welcome to assume that $\Omega$ is either 
a non-commutative polydisk, that is of 
the form
\be
\label{eqa2}
\mathcal{P} (\A) \= \{ x \in \A^d : \max_{1 \leq r \leq d} \| x^r \| < 1 \} ,
\ee
or a non-commutative row ball, that is
\be
\label{eqa3}
\mathcal{R}(\A) \=
\{
x \in \A^d : x^1 (x^1)^* + \dots + x^d (x^d)^* < 1 
\} .
\ee
More examples are given in Section \ref{secf}.
}
\end{example}

\begin{definition}
\label{defram2}
Let $\O \subseteq \A^d$ be an NC domain. A function $F: \O \to \bh$ is {\em intertwining preserving}
 if whenever  $x,y \in \O$ and  $L: \h \to \h$ is a bounded linear operator that satisfies $Lx = yL$ (i.e., $Lx^r = y^r L$ for each $r$)
then $L F(x) = F(y) L$.

We say $F$ is an 
 {\em NC  function} if it is intertwining preserving and 
  locally bounded on
$\O$.
\end{definition}

\noindent \att \textbf{Remark \thetheorem.} For any positive integer $b$ we may similarly define an NC mapping $\F:\Omega \to B(\h)^\cdv$ 
where 
$\F = (F^1,\dots, F^\cdv)$ and each $F^i:\Omega\to B(\h)$ is an NC function.
Many of our results can be reinterpreted for NC mappings with little to no overhead.

\hphantom{.}

In Section \ref{secb} we show that every NC function is Fr\'echet  holomorphic.
Our first main result is proved in Theorem \ref{thmb1}. A scalar point $a$ is a point
each of whose components is a scalar multiple of the identity.

\begin{theorem}
\label{thma1}
Suppose $\Omega$ is an NC domain containing a scalar point $a$, and $F$ is NC on $\O$.
Then for each $k$, the $k^{\rm th}$ derivative $D^kF(a) [ h_1, \dots, h_k]$ is a symmetric homogeneous free polynomial of degree $k$ in $h_1, \dots, h_k$.
\end{theorem}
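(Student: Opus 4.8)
The strategy is to pass from $F$ to the homogeneous pieces of its Taylor expansion and to pin each piece down by evaluating it at a ``universal nilpotent'' assembled from the truncated free shift and carried into $\A^d$ by the direct sum property \eqref{eqa1}. First I would normalize: since $a$ is a scalar point, $\Omega-a$ is again an NC domain and $x\mapsto F(x+a)$ is NC on it with the same $k$\textsuperscript{th} derivative at $0$ that $F$ has at $a$, so we may assume $a=0\in\Omega$. By the results of Section~\ref{secb}, $F$ is Fr\'echet holomorphic, hence on a norm ball around $0$ contained in $\Omega$ we have $F(x)=\sum_{l\ge 0}P_l(x)$ with $P_l(x)=\tfrac1{l!}D^lF(0)[x,\dots,x]$ a bounded $l$-homogeneous polynomial $\A^d\to\bh$. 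Since intertwining is scale-invariant, the identity $LF(tx)=F(ty)L$ (valid for small $|t|$ whenever $Lx=yL$) may be expanded in $t$ and the coefficients matched, showing each $P_l$ is intertwining preserving; the same device applied to the direct-sum identity for $F$ shows each $P_l$ respects finite direct sums. In particular $P_0=F(0)$ commutes with every bounded operator, hence is scalar, and $P_l(x)$ always lies in the von Neumann algebra generated by $x^1,\dots,x^d$. Thus Theorem~\ref{thmb1} (and hence Theorem~\ref{thma1}) reduces to: \emph{every bounded $l$-homogeneous intertwining-preserving, direct-sum-respecting $P\colon\A^d\to\bh$ is a homogeneous free polynomial of degree $l$.} Note that once $P_k$ is known to be such a polynomial $\pi_k$, polarization gives $D^kF(a)=D^kF(0)$ equal to the symmetrization of $\pi_k$, which is the asserted form (``symmetric'' is automatic for a $k$\textsuperscript{th} derivative, so the content is precisely that it is a free polynomial).

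For the reduced statement I would build the universal object. Let $N=1+d+\dots+d^l$ and let $\Xi=(\Xi^1,\dots,\Xi^d)$ be the truncated free shift on $\C^N$ with basis $\{e_w:\,|w|\le l\}$, $\Xi^r e_w=e_{rw}$ if $|rw|\le l$ and $\Xi^r e_w=0$ otherwise; its crucial feature is that $\{\Xi^w:\,|w|\le l\}$ is linearly independent and spans the algebra generated by $\Xi$, which (identifying $\C^N$ with the regular module) equals its own bicommutant. Ordering the words by length makes $\Xi$ strictly upper triangular, so \eqref{eqa1} lets us form $\xi:=U_N(\Xi\otimes I_\h)U_N^*\in\A^d$. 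Then $P(\xi)$ commutes with $\{\xi\}'=U_N\big(\{\Xi\}'\otimes\bh\big)U_N^*$, hence $P(\xi)\in\{\xi\}''=U_N\big(\{\Xi\}''\otimes\C I_\h\big)U_N^*$, so $P(\xi)=U_N\big(\pi(\Xi)\otimes I_\h\big)U_N^*$ for some free polynomial $\pi$ of degree $\le l$ with scalar coefficients. Finally, the invertible diagonal operator $\Delta_t=\mathrm{diag}(t^{|w|})_w$ satisfies $\Delta_t\Xi^r\Delta_t^{-1}=t\,\Xi^r$, so it (suitably conjugated) intertwines $\xi$ with $t\xi$; combining this with $l$-homogeneity of $P$ forces $\pi(t\,\cdot)=t^l\pi$, and by linear independence of the $\Xi^w$ this makes $\pi$ homogeneous of degree exactly $l$. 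Thus $P$ agrees with the homogeneous degree-$l$ free polynomial $\pi$ at the single tuple $\xi$.

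The last step is to promote ``$P=\pi$ at $\xi$'' to ``$P=\pi$ on a neighborhood of $0$'' (equivalently, by homogeneity, everywhere). For any $x\in\A^d$ the $\C$-linear map $e_w\mapsto f_{|w|}\otimes x^w$ from $\C^N$ to $\C^{l+1}\otimes\h$ (with $f_0,\dots,f_l$ the standard flag and $x^\emptyset=I$) intertwines $\Xi$ with the single-Jordan-block tuple $x\otimes N_{l+1}$; conjugating into $\A^d$ and chasing the vector $e_\emptyset\otimes\eta$ through the intertwining identity for $P$ yields $P\big(U_{l+1}(x\otimes N_{l+1})U_{l+1}^*\big)=U_{l+1}\big(\pi(x)\otimes N_{l+1}^l\big)U_{l+1}^*$, so $P$ and $\pi$ already agree on \emph{every} such nilpotent tuple (and likewise with $N_M$ in place of $N_{l+1}$ for all $M>l$). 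The hard part is to upgrade this to an identity on all of $\A^d$: intertwiners link a tuple only to its restrictions, compressions, conjugates and ``padded'' dilations, and a nilpotent tuple sees only nilpotent tuples, so one cannot simply transport the identity, and the nilpotent tuples are far too sparse to determine a bounded multilinear map by polarization. I expect this to be where the genuine work lies, and I would attack it by one of two routes: an approximation argument exploiting the concrete direct-sum structure of $\A$ to exhibit a general element as a controlled limit of tuples on which $P$ is already known; or an induction on $l$ in which one subtracts the free polynomials $\pi_0,\dots,\pi_{l-1}$ (each NC), reducing to the case where $F$ vanishes to order $l$ at $0$, and then peels off one degree using the difference–differential operator (the $(1,2)$-block of $F$ at $U_2\big[\begin{smallmatrix}x&h\\0&x\end{smallmatrix}\big]U_2^*$), whose lowest-order part is the $(l-1)$-homogeneous object to which the inductive hypothesis applies. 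Either way, once $P_k=\pi_k$ near $0$, polarization completes the proof.
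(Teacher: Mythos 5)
The proposal is well thought through and the first two of your three stages are essentially sound, but the gap you yourself flag at the end is a real one, and it is not as small as your suggested remedies make it look. Concretely: after the bicommutant argument at the universal truncated shift $\xi$ you know that $\widetilde{P}_l$ at $\xi$, and hence at any bidiagonal Jordan nilpotent tuple $J(x)$ with superdiagonal $x$, has top-right entry $\pi(x)$ for a fixed free polynomial $\pi$ homogeneous of degree $l$. What you actually need is $P_l(x)=\pi(x)$ at the generic tuple $x$ itself. The bridge between ``the $(1,l+1)$ entry of $\widetilde{P}_l(J(x))$'' and ``$P_l(x)$'' is precisely the identity
\[
\Delta^l F(0,\dots,0)[x,\dots,x] \;=\; \tfrac{1}{l!}\,D^lF(0)[x,\dots,x],
\]
and this is exactly what the paper's Lemma~\ref{lemdel} supplies. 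The proof there is a short but nontrivial Toeplitz conjugation: the upper-triangular Toeplitz matrix $T$ with entries $1/(j!\lambda^j)$ conjugates the diagonal tuple $\mathrm{diag}(x,x+\lambda h,\dots,x+k\lambda h)$ into the bidiagonal one with $h$'s on the superdiagonal, so the corner of $\widetilde{F}$ at the bidiagonal tuple becomes a finite-difference quotient $\frac{(-1)^k}{k!\lambda^k}\sum_j(-1)^j\binom{k}{j}F(x+j\lambda h)$, which tends to $\tfrac1{k!}D^kF(x)[h,\dots,h]$. Your two proposed fixes do not produce this: nilpotent tuples are indeed not dense in any useful topology, so the approximation route stalls, and the induction on $l$ via $DF$ mixes orders in $(x,h)$ in a way that needs exactly this same lemma to untangle. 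Without Lemma~\ref{lemdel} (or an equivalent), your argument ends with $\Delta^l F(0)[x,\dots,x]=\pi(x)$ and cannot reach $P_l(x)=\pi(x)$.

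It is also worth noting that the paper's route to the polynomial structure is considerably lighter than the bicommutant machinery you set up. Instead of identifying $\{\xi\}''$ with a free-polynomial algebra, the paper works directly with the difference-differential operators $\Delta^k$ at a tuple of \emph{scalar} base points $a_1,\dots,a_{k+1}$ (Lemma~\ref{lemc6}): a single intertwining by the block-diagonal matrix $\mathrm{diag}(H_1\cdots H_k, H_2\cdots H_k, \dots, 1)$ pulls the operator coefficients $H_i$ out to the left of $\Delta^k$, leaving $\Delta^k[e_{j_1},\dots,e_{j_k}]$ as a constant commuting with all of $B(\h)$ (hence scalar). Linearity (Lemma~\ref{lemc5}, itself a careful block intertwining argument), Lemma~\ref{lemdel}, and the symmetry of $D^k$ (Proposition~\ref{propb3}) then finish via the polarization step you also sketch --- expand $D^kF(a)$ at $\sum_i s_iH_ie_{j_i}$ and extract the coefficient of $s_1\cdots s_k$. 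The content that you are missing is thus not a routine technicality but one of the two genuinely new lemmas in the argument, and you should treat it as the crux rather than deferring it.
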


We derive several consequences of this result.
In Theorem \ref{thmd2}, we show that if $\Omega$ is a balanced NC domain, then a function $F$ on $\Omega$ 
is an NC function if and only if it can be uniformly approximated by free polynomials on every finite set.
In Theorem \ref{propf1} we show that NC functions on most balanced domains are automatically sequentially strong operator continuous. This allows us to prove that every non-commutative function on the non-commutative matrix polydisk (resp. row ball) has a unique extension to an NC function on $\mathcal{P}(\bh)$
(resp. $\mathcal{R}(\bh)$).

Similarity preserving maps of matrices were studied by C. Procesi \cite{pro76}, who showed they were all
trace polynomials. In the matricial case, this can be used to prove the analogue of Theorem \ref{thma1}
\cite{ks17}. 
In the infinite dimensional case, we cannot use this theory, which makes the proof of Theorem \ref{thma1}
more complicated. However, we can then use the theorem to prove that the only intertwining preserving
bounded
$k$-linear maps are the obvious ones, the free polynomials. In Theorem \ref{thmb2} we prove:
\begin{theorem}
Let $\O$ be an NC domain.
Let $\Lambda : [\O]^k \to \bh$ be NC and $k$-linear.
 Then  $\Lambda$ is a homogeneous free polynomial of degree $k$.
\end{theorem}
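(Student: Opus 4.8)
The plan is to deduce the statement from Theorem~\ref{thma1} by viewing $\Lambda$ as a homogeneous ``slice'' of an NC function on a domain through a scalar point. First I would clear away the domain bookkeeping. Since $\Lambda$ is $k$-linear it extends uniquely to a $k$-linear map on all of $(\A^d)^k \cong \A^{kd}$, which I will still denote $\Lambda$; local boundedness then forces the estimate $\|\Lambda(h_1,\dots,h_k)\| \le C\,\|h_1\|\cdots\|h_k\|$ for some constant $C$, and one checks that the extension is again intertwining preserving. For this last point one first treats invertible $L$: the relations $Lh_i = g_iL$ pin down $g_i = Lh_iL^{-1}$, so the map $(h_1,\dots,h_k)\mapsto L\Lambda(h_1,\dots,h_k)-\Lambda(Lh_1L^{-1},\dots,Lh_kL^{-1})L$ is bounded and $k$-linear and vanishes on a nonempty open set (a neighbourhood of any scalar point of $[\Omega]$), hence vanishes identically; the case of general $L$ follows by the usual device of realizing $L$ as a corner of an invertible operator on $\h\oplus\h$ and invoking the direct-sum assumption on $\A$. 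After this we may regard $\Lambda$ as a bounded, $k$-linear, intertwining-preserving map on $\A^{kd}$.

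Next I would produce the NC function. Work in $kd$ variables, grouped as $(x_1,\dots,x_k)$ with each $x_i\in\A^d$, and let $\Omega_0 := \mathcal{P}(\A) \subseteq \A^{kd}$ be the non-commutative polydisk; by the direct-sum assumption on $\A$ this is an NC domain (cf.\ Example~\ref{exama1}), and it contains the scalar point $0$. Define $F\colon\Omega_0\to\bh$ by $F(x_1,\dots,x_k):=\Lambda(x_1,\dots,x_k)$. Then $F$ is intertwining preserving and bounded on $\Omega_0$, hence is an NC function, so Theorem~\ref{thma1}, applied at the scalar point $0$ with derivative order $k$, shows that $D^kF(0)$ is a symmetric homogeneous free polynomial of degree $k$ in its $k$ arguments, each ranging over $\A^{kd}$.

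To finish I would use homogeneity. Because $F$ is $k$-homogeneous and, by Section~\ref{secb}, Fr\'echet holomorphic, all Taylor coefficients of $F$ at $0$ vanish except the $k$-th, so $F(X)=\tfrac1{k!}\,D^kF(0)[X,\dots,X]$; taking $X=(x_1,\dots,x_k)$ gives
$$\Lambda(x_1,\dots,x_k)=\tfrac1{k!}\,D^kF(0)\big[(x_1,\dots,x_k),\dots,(x_1,\dots,x_k)\big].$$
Since $D^kF(0)$ is a $k$-linear free polynomial, each of its monomials has the form $X_{\sigma(1)}^{p_1}\cdots X_{\sigma(k)}^{p_k}$ with one coordinate drawn from each argument; substituting $(x_1,\dots,x_k)$ into every argument turns each monomial into a word in the coordinates $(x_i)^r$, so the displayed identity exhibits $\Lambda$ as a free polynomial of degree $k$ in $x_1,\dots,x_k$. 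A free polynomial that happens to be a $k$-linear function of $x_1,\dots,x_k$ must coincide with its component of multidegree $(1,\dots,1)$ — this is seen by splitting into multidegree components and using linearity in each $x_i$ in turn — and that component is precisely a homogeneous free polynomial of degree $k$. This completes the argument.

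The step I expect to be the main obstacle is the first paragraph: verifying that the intertwining-preserving hypothesis, assumed only on $[\Omega]^k$, yields an honest NC function on a standard domain through a scalar point, so that Theorem~\ref{thma1} can be brought to bear. Everything afterwards is a reduction — the substantive analytic input lives in Theorem~\ref{thma1}, and what remains is the purely algebraic fact that a $k$-linear free polynomial is automatically multilinear.
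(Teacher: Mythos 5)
Your proof is correct and rests on the same pillars as the paper's: Theorem~\ref{thma1} applied to the $k$-th derivative at a scalar point, combined with the Euler-type observation that $k$-linearity lets one recover $\Lambda$ from its own $k$-th derivative. The paper states this more directly --- it computes from the definition of the derivative that $D^k\Lambda(x)[\mathfrak{h},\dots,\mathfrak{h}]=k!\,\Lambda(h_1,\dots,h_k)$ where $\mathfrak{h}=(h_1,\dots,h_k)$, then cites Theorem~\ref{thmb1}, without extending $\Lambda$ off $[\Omega]^k$ or passing to the polydisk; your explicit extension, the intertwining-preservation check for that extension, and the reduction to the multidegree-$(1,\dots,1)$ component are correct bookkeeping that the paper leaves implicit (and both arguments quietly use the availability of a scalar point, which your relocation to $\mathcal{P}(\A)\subseteq\A^{kd}$ makes transparent).
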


\section{Preliminaries}
\label{secb}

Throughout this section, we assume that
 $\Omega$ is an NC  domain 
  in $\A^d$, 
and 
$F: \O \to \bh$ is an NC function.   Let $\np$ denote the positive integers.

For each $n \in \np$, define the unitary and similarity envelopes by
\beq
	\wsO_n & \ := \  & \{ U^* x U\  | \ U : \h^{(n)} \to \h, {\rm \ unitary},\ x \in \Omega \} \\
	\wO_n 
	 & \ := \  & \{ S^{-1} x S \  |\  S : \h^{(n)} \to \h, {\rm \ invertible},\ x \in \Omega \}.
\eeq
Notably, for $x_1,\dots, x_n\in \Omega$, $\oplus_{j=1}^n x_j\in \wsO_n$.
We can extend $F$ to $\wO = \cup_{n=1}^\i \wO_n$ by
\be
\label{eqb1}
\widetilde{F} ( \tilde{x}) \ = \ S F(x) S^{-1},
\ee
where $\tilde{x} = S^{-1}xS$ for some $x\in \Omega$.
 
 It is straightforward to prove the following from the intertwining preserving property of $F$.
 Nevertheless, we include a proof to showcase the simplicity of working with $\widetilde{F}$ in lieu of $F$.

\begin{proposition}
\label{prb2}
The function $\wF$ defined by \eqref{eqb1} is well-defined, 
and if $\tilde{x} \in \wO_m $ and
$\tilde{y} \in  \wO_m $ satisfy $ \tilde{L} \tilde{x} = \tilde{y} \tilde{L}$ for some
linear $\tilde{L} : \tensor{\h}{\C^m} \to \tensor{\h}{\C^n}$, then
$\tilde{L}  \wF ( \tilde{x}) =  \wF ( \tilde{y}) \tilde{L} $.

In particular, if $x_j \in \O$ for $1 \leq j \leq n$, then $\wF (\oplus x_j) = \oplus F(x_j)$.
\end{proposition}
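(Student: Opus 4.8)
The plan is to establish the three assertions of Proposition~\ref{prb2} in the order they are stated: first well-definedness of $\wF$, then the intertwining-preserving property on $\wO$, and finally the direct-sum formula as a corollary.

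\textbf{Well-definedness.} Suppose $\tilde x = S_1^{-1} x_1 S_1 = S_2^{-1} x_2 S_2$ with $x_1, x_2 \in \Omega$ (living in matrix algebras over $\h$ of possibly different sizes), and set $L := S_2 S_1^{-1}$, a bounded invertible operator. From $S_1^{-1} x_1 S_1 = S_2^{-1} x_2 S_2$ we get $L x_1 = x_2 L$ coordinatewise. Since $x_1, x_2 \in \Omega$ and $F$ is intertwining preserving, $L F(x_1) = F(x_2) L$, i.e. $S_2 S_1^{-1} F(x_1) = F(x_2) S_2 S_1^{-1}$, which rearranges to $S_1 F(x_1) S_1^{-1} = S_2 F(x_2) S_2^{-1}$. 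Hence the formula \eqref{eqb1} does not depend on the choice of representation $\tilde x = S^{-1} x S$, so $\wF$ is well-defined. (One small point to note: the definition of intertwining preserving in Definition~\ref{defram2} is stated for $L:\h\to\h$ and $x,y\in\Omega$; one should observe it extends verbatim to $L$ between finite direct sums of copies of $\h$ by conjugating with the unitaries $U_n$ from \eqref{eqa1}, or simply remark that this is the natural ambient-algebra formulation — this is the only mild bookkeeping step.)

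\textbf{Intertwining on $\wO$.} Now take $\tilde x \in \wO_m$, $\tilde y \in \wO_n$ with $\tilde x = S^{-1} x S$, $\tilde y = T^{-1} y T$ for $x, y \in \Omega$ and $S, T$ invertible, and suppose $\tilde L \tilde x = \tilde y \tilde L$ for some linear $\tilde L$. Substituting and rearranging, $(T \tilde L S^{-1}) x = y (T \tilde L S^{-1})$, so setting $M := T \tilde L S^{-1}$ we have $M x = y M$ with $x, y \in \Omega$; the intertwining-preserving property of $F$ gives $M F(x) = F(y) M$. Substituting back $M = T\tilde L S^{-1}$ and using $\wF(\tilde x) = S^{-1} F(x) S$, $\wF(\tilde y) = T^{-1} F(y) T$ (from \eqref{eqb1}), this becomes exactly $\tilde L \wF(\tilde x) = \wF(\tilde y) \tilde L$.

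\textbf{Direct sums.} Finally, given $x_1, \dots, x_n \in \Omega$, the block-diagonal tuple $\bigoplus_j x_j$ lies in $\wsO_n \subseteq \wO_n$ (take $S$ to be any unitary $\h^{(n)}\to\h$, or work in the matrix algebra directly). Applying the intertwining property just proved with $\tilde L$ the coordinate inclusion/projection between $\h^{(n)}$ and its $j$-th summand $\h$ — which intertwines $\bigoplus_j x_j$ with $x_j$ — yields that $\wF(\bigoplus_j x_j)$ commutes appropriately with all these maps, forcing $\wF(\bigoplus_j x_j)$ to be block-diagonal with $j$-th block equal to $F(x_j)$; that is, $\wF(\bigoplus_j x_j) = \bigoplus_j F(x_j)$. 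I do not expect a genuine obstacle here — the content is entirely the algebraic manipulation of intertwiners — so the only thing requiring care is the identification of ``intertwining preserving'' at the matrix-over-$\h$ level with the hypothesis stated for $F$ on $\Omega \subseteq \A^d$, which is handled once and for all via the unitaries $U_n$.
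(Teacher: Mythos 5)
Your proof is correct and follows essentially the same route as the paper: conjugate the intertwiner $\tilde L$ down to a map $M = T\tilde L S^{-1}:\h\to\h$ intertwining $x,y\in\Omega$, apply the hypothesis on $F$, conjugate back, and then obtain the direct-sum identity from the coordinate inclusions and projections exactly as you describe. One small correction to your parenthetical aside: no appeal to the unitaries $U_n$ from \eqref{eqa1} and no extension of Definition~\ref{defram2} is needed, because the operator you construct (whether $S_2S_1^{-1}$ in the well-definedness step or $T\tilde L S^{-1}$ in the intertwining step) is already a bounded map from $\h$ to $\h$ — that is precisely what conjugating by $S$ and $T$ accomplishes, so Definition~\ref{defram2} applies verbatim.
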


\begin{proof}
	Let $\tilde{x} = S^{-1}xS$ and $\tilde{y} = T^{-1}yT$ for $x,y\in\Omega$. Define $L:\h\to\h$ by $L = T\tilde{L}S^{-1}$ and consider the 
	following 
	intetwining:
	\begin{align*}
		Lx &= T\tilde{L}S^{-1}\hat{x}  = T\tilde{L}S^{-1}S\tilde{x}S^{-1} \\
			&= T\tilde{L}\tilde{x}S^{-1} = T\tilde{y}\tilde{L}S^{-1} = TT^{-1}yT\tilde{L}S^{-1} \\
			&= yL.
	\end{align*}
	Thus, $LF(x) = F(y)L$ and consequently
	\begin{align*}
		\tilde{L}\wF(x) 
			&= \tilde{L}S^{-1}F(x)S = T^{-1}LF(x)S = T^{-1}F(y)LS = \wF(\tilde{y})T^{-1}LS \\
			&= \wF(\tilde{y})\tilde{L}.
	\end{align*}
	
	Finally, let $P_j:\h\to\h^{(n)}$ be the inclusion of $\h$ onto the $j^{\text{th}}$-coordinate of $\h^{(n)}$.
	Observe that $(\oplus_{i=1}^n x_i)P_j = P_j x_j$.
	Hence, $\wF(\oplus_{i=1}^n x_i)P_j = P_j \wF(x_i) = P_j F(x_j)$.
	The intertwining with $P_j^*$ has $x_jP_j^* = P_j^*(\oplus_{i=1}^n x_i)$.
	Thus, $P_j^*F(x_j) = P_j^*\wF(\oplus_{i=1}^n x_i)$ and combining these two intertwining shows that $\wF(\oplus_{i=1}^n x_i)$ is a 
	diagonal block operator and
	\[
		\wF(\oplus_{i=1}^n x_i) = \oplus_{i=1}^n F(x_i).
	\]
\end{proof}

For later use, let us give a sort of converse.
\begin{lemma}
\label{lemd3}
Suppose that $\O$ is an NC domain, and $F : \O \to \bh$ satisfies 
\be
\label{eqd3}
F (S^{-1} [ x \oplus y] S ) \= S^{-1} [F (x) \oplus F(y) ] S 
\ee
whenever $S: \h \to \h^{(2)}$ and $x,y, S^{-1} [ x \oplus y] S \in \O$.
Then $F$ is intertwinining preserving.
\end{lemma}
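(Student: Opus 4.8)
The plan is to recover an intertwining $LF(x) = F(y)L$ from the direct-sum hypothesis \eqref{eqd3} by encoding the intertwining relation $Lx = yL$ as a similarity between $x \oplus y$ and itself (or rather, as a conjugate of $x \oplus y$ that still lies in $\O$), and then applying \eqref{eqd3} to read off the block structure of $F$. Concretely, suppose $x, y \in \O$ and $L : \h \to \h$ is bounded with $Lx = yL$. First I would handle the case $\|L\| < 1$: then the operator $S = \bsm I & 0 \\ L & I \esm : \h \to \h^{(2)}$ is invertible (lower-triangular with identity diagonal), and a direct computation shows $S^{-1}(x \oplus y)S = \bsm x & 0 \\ 0 & y \esm$ conjugated appropriately — more precisely one checks that $S\,(x\oplus y)\,S^{-1}$ or its inverse is again of the form $z \oplus y$ plus an off-diagonal term governed by $Lx - yL = 0$, so in fact $S^{-1}(x\oplus y)S$ is block-triangular with diagonal $(x,y)$ and hence, crucially, has the same norm-type membership as $x \oplus y$; since $x \oplus y \in \wsO_2 \subseteq \O$ up to the fixed unitary $U_2$, I need to arrange that this conjugate also lands in $\O$. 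The cleanest route is: $S^{-1}(x \oplus y) S \in \O$ is exactly the hypothesis we are allowed to assume in \eqref{eqd3}, so we do not need to prove membership — we simply invoke \eqref{eqd3} to get $F(S^{-1}(x\oplus y)S) = S^{-1}(F(x) \oplus F(y))S$, and then separately compute $F$ of that same element using intertwining-type bookkeeping to extract $LF(x) = F(y)L$.

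Here is the key computation in more detail. With $S = \bsm I & 0 \\ L & I \esm$, one has $S^{-1} = \bsm I & 0 \\ -L & I \esm$, and a direct matrix multiplication gives
\[
S^{-1}(x \oplus y)S \;=\; \begin{bmatrix} x & 0 \\ Lx - yL & y \end{bmatrix} \;=\; \begin{bmatrix} x & 0 \\ 0 & y \end{bmatrix},
\]
using $Lx = yL$. So the hypothesis \eqref{eqd3} literally says $F(x \oplus y) = S^{-1}(F(x)\oplus F(y))S = \bsm F(x) & 0 \\ LF(x) - F(y)L & F(y) \esm$. On the other hand, by applying \eqref{eqd3} with $S = U_2^*$ (the identity similarity realizing the plain direct sum, which is assumed available for the NC domain), $F(x \oplus y) = F(x) \oplus F(y) = \bsm F(x) & 0 \\ 0 & F(y) \esm$. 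Comparing the $(2,1)$ entries forces $LF(x) - F(y)L = 0$, which is exactly intertwining preservation for this $L$.

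The remaining issue — and the main obstacle — is removing the restriction $\|L\| < 1$, and more fundamentally checking that the relevant conjugates actually lie in $\O$ so that \eqref{eqd3} applies. For the norm restriction: given arbitrary bounded $L$ with $Lx = yL$, for any scalar $t$ with $|t|\,\|L\| < 1$ we also have $(tL)x = y(tL)$, so the argument above yields $tL\,F(x) = F(y)\,tL$, and dividing by $t$ gives the general case. For membership in $\O$: the element $S^{-1}(x\oplus y)S$ equals $x \oplus y$ on the nose by the computation above, so it lies in $\O$ precisely because $x \oplus y$ does (via the NC domain's direct-sum unitary $U_2$); thus \eqref{eqd3} is genuinely applicable with this $S$, provided we also compose with $U_2$ appropriately so that the domains and codomains of the similarity match the statement ($S : \h \to \h^{(2)}$ versus the unitary $\h^{(2)} \to \h$). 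I would absorb that bookkeeping by replacing $S$ with $U_2^* S$ throughout, so that $(U_2^*S)^{-1}(x\oplus y)(U_2^*S) = S^{-1} U_2 (x \oplus y) U_2^* S$ — here one uses that $U_2(x\oplus y)U_2^* \in \O$ by Definition \ref{defram1} — and the same $(2,1)$-entry comparison goes through verbatim. This is the only place where care is needed; everything else is routine block-matrix algebra.
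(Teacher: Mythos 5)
Your proof is correct and takes essentially the same approach as the paper, which simply sets $S = \bsbm 1 & L \\ 0 & 1 \esbm$ and invokes \eqref{eqd3}; your lower-triangular version applied to $x \oplus y$ is the same computation up to transposition (equivalently, up to swapping the roles of $x$ and $y$). One small point: the initial restriction to $\|L\| < 1$ and the subsequent rescaling by $t$ are unnecessary, since $\bsbm I & 0 \\ L & I \esbm$ is invertible for every bounded $L$, with inverse $\bsbm I & 0 \\ -L & I \esbm$ as you yourself wrote, so there is no norm constraint to remove.
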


\begin{proof}
Suppose $Lx = yL$.
Let
\[
S = \bbm
1 & L \\
0 & 1 
\ebm,
\]
and \eqref{eqd3} implies that $L F(x) = F(y) L$.
\end{proof}

Recall that  $F$ is Fr\'echet holomorphic if, for every $x \in \O$, there is an open neighborhood $G$ of $0$ in
$\A^d$ so that the Taylor series
\be
\label{eqc4}
F(x+h)  \= F(x) + \sum_{k=1}^\infty D^k F(x) [h, \dots, h] 
\ee
converges uniformly for $h$ in $G$.

Using \eqref{eqa1}, it follows that
 if $x_1, \dots, x_n \in \O$, then $\exists \vare > 0$ so that
 if $y \in \tna$ and $\| y - \oplus x_j \|  < \vare$, then $U_n y U_n^* \in \O$.
The following is proved in \cite{amip15}, and, in the form stated,
in \cite{amy20}*{Section 16.1}.

\begin{proposition}
\label{prb1}
If $\Omega\subset \A^d$ is an NC domain and $F$ is an NC function on $\Omega$ then
\begin{enumerate}
\item[(i)] The function $F$ is  Fr\'echet holomorphic.

\item[(ii)] For $x \in \O$, $h \in \A$,
\[
\wF \left(
\bbm
x & h \\
0 & x \ebm
\right) 
\=
\bbm
F(x) & DF(x)[h]\\
0 & F(x) 
\ebm .
\]
\end{enumerate}
\end{proposition}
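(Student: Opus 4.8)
The plan is to prove (i) and (ii) together, since (ii) is essentially the infinitesimal content of (i) combined with the intertwining-preserving property. First I would establish that $F$ extended to $\wO$ via \eqref{eqb1} is locally bounded there: given $x_1,\dots,x_n \in \O$, the remark before the statement shows that for $y \in \tna$ sufficiently close to $\oplus x_j$ one has $U_n y U_n^* \in \O$, and $\wF$ is then defined and bounded on a norm-neighborhood of $\oplus x_j$ in $\tna$ by Proposition \ref{prb2} together with local boundedness of $F$ near each $x_j$. The key point is that $\wF$ respects the upper-triangular structure, so evaluating $\wF$ on a block upper-triangular tuple controls the off-diagonal entries too.

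For Fréchet holomorphy, I would verify Gateaux holomorphy first and then upgrade using local boundedness. Fix $x \in \O$ and $h \in \A^d$. For $\zeta \in \C$ small, the $2\times 2$ block operator $M(\zeta) = \bigl[\begin{smallmatrix} x & \zeta h \\ 0 & x \end{smallmatrix}\bigr]$ lies in (the image under $U_2$ of) $\wO_2$ when $|\zeta|$ is small, and is intertwined with $x$ by the nilpotent $N = \bigl[\begin{smallmatrix} 0 & h \\ 0 & 0\end{smallmatrix}\bigr]$-type relations; in fact $M(\zeta)$ is conjugate to $x \oplus x$ away from $\zeta = 0$ only formally, so instead I use the intertwining-preserving property of $\wF$ directly: the inclusions $P_1, P_2$ and the map $\bigl[\begin{smallmatrix} 1 & 0\end{smallmatrix}\bigr]$ intertwine $M(\zeta)$ with $x$, forcing
\[
\wF(M(\zeta)) \=
\bbm
F(x) & G(\zeta) \\
0 & F(x)
\ebm
\]
for some $G(\zeta) \in \bh$ depending on $\zeta$ and $h$. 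Local boundedness of $\wF$ near $x\oplus x$ makes $\zeta \mapsto \wF(M(\zeta))$, hence $G$, locally bounded, and a standard argument (testing against a third intertwining, or directly differencing $M(\zeta)$ against $M(\zeta')$ using $\bigl[\begin{smallmatrix} 1 & (\zeta-\zeta')h \\ 0 & 1\end{smallmatrix}\bigr]$-type operators) shows $G$ is complex-differentiable, indeed linear in $\zeta$, so $G(\zeta) = \zeta\, G(1)$. This gives a well-defined Gateaux derivative $DF(x)[h] := G(1)$, linear in $h$ by a similar block computation, and shows simultaneously the formula in (ii). Iterating with larger Jordan-type blocks (or invoking the scalar one-variable theory of the locally bounded function $\zeta \mapsto \wF(M(\zeta))$) yields holomorphy of $F$ along each complex line; combined with local boundedness, the standard Zorn/Dunford theorem in Banach spaces upgrades separate-plus-locally-bounded to Fréchet holomorphy, giving the uniformly convergent Taylor series \eqref{eqc4}.

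The main obstacle is not the formula in (ii)---which drops out of the intertwining relations once $\wF$ is in hand---but ensuring that the off-diagonal term $G(\zeta)$ is actually bounded and holomorphic rather than merely defined. Concretely, one must check that the hypothesis \eqref{eqa1} really does place $U_2 M(\zeta) U_2^*$ inside $\O$ for all small $\zeta$ (uniformly in a neighborhood), so that $\wF$ and hence $G$ inherit local boundedness; the passage from "$\oplus x_j \in \wsO_n$" to "a whole norm-ball of upper-triangular perturbations lies in $U_n^*\O U_n$" is the technical heart. Once that is secured, promoting Gateaux-holomorphic-and-locally-bounded to Fréchet-holomorphic is classical (e.g. Mujica, or \cite{amy20}*{Section 16.1}, which is cited), so I would simply invoke it. I would also remark that by replacing the $2\times 2$ Jordan block with the $(k{+}1)\times(k{+}1)$ one, the same reasoning extracts $D^kF(x)$ as the appropriate super-diagonal entry, which is the form needed later for Theorem \ref{thma1}.
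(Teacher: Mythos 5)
The approach misses the crucial step: connecting the off‑diagonal entry of $\wF$ to the actual difference quotients of $F$. You define $M(\zeta) = \bsbm x & \zeta h \\ 0 & x \esbm$, show $\wF(M(\zeta)) = \bsbm F(x) & G(\zeta) \\ 0 & F(x) \esbm$, establish that $G$ is homogeneous in $\zeta$, and then \emph{declare} $DF(x)[h] := G(1)$. But $F(x+\zeta h)$ never appears anywhere in this computation: the diagonal of $M(\zeta)$ is $x$ on both entries for every $\zeta$. So proving that $\zeta \mapsto \wF(M(\zeta))$ is holomorphic (which is trivial, since $G$ is linear) tells you nothing about whether $\zeta \mapsto F(x+\zeta h)$ is differentiable. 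To prove (i) and (ii) you must show that $\lim_{t \to 0}\frac{1}{t}[F(x+th)-F(x)]$ exists and equals $G(1)$, and this requires a different block and a similarity. Concretely, with $T_t = \bsbm 1 & 1/t \\ 0 & 1\esbm$ one has $T_t \bsbm x & 0 \\ 0 & x+th \esbm T_t^{-1} = \bsbm x & h \\ 0 & x+th \esbm$, giving
\[
\wF\left(\bbm x & h \\ 0 & x+th \ebm\right) \ = \ \bbm F(x) & \dfrac{F(x+th)-F(x)}{t} \\ 0 & F(x+th) \ebm ,
\]
so the difference quotient \emph{is} an entry of $\wF$ at a point of $\wO_2$. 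Local boundedness of $\wF$ then shows the difference quotients are bounded (hence $F$ is locally Lipschitz), but to take the limit $t \to 0$ and land on $G(1)$ one still needs continuity of $\wF$ at $\bsbm x & h \\ 0 & x\esbm$, which has to be extracted by iterating this finite‑difference calculus one level up (bound $\Delta^2$, get a Lipschitz estimate for $\Delta^1$, etc.). That bootstrap is the genuine content of the proposition and is absent from your sketch.

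A secondary point: the intertwiner $\bsbm 1 & (\zeta-\zeta')h \\ 0 & 1\esbm$ that you suggest for differencing $M(\zeta)$ against $M(\zeta')$ does not actually intertwine them unless each coordinate of $h$ commutes with each coordinate of $x$, because the product introduces an $hx - xh$ term. The correct intertwiner for homogeneity of $G$ is a diagonal scaling, e.g. $\bsbm c & 0 \\ 0 & 1 \esbm M(\zeta) = M(c\zeta)\bsbm c & 0 \\ 0 & 1\esbm$, which works since $c$ is a scalar. Finally, your worry that "a whole norm‑ball of upper‑triangular perturbations lies in $U_n^*\O U_n$" is the technical heart is misplaced: the paper states this explicitly just before the proposition and it follows directly from openness of $\O$ and property \eqref{eqa1}. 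The paper itself does not reprove the proposition; it cites \cite{amip15} and \cite{amy20}*{Section 16.1}, where the finite‑difference bootstrap described above is carried out.
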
 

%

We wish to prove that when $x$ is a scalar point, each derivative in \eqref{eqc4} is actually a free  polynomial in $h$. This is straightforward for the first derivative.
\begin{lemma}
\label{lemc2}
Suppose $a = (a^1, \dots, a^d)$ is a $d$-tuple of scalar matrices in $\O$.
Then $F(a)$ is a scalar, and $DF(a)[c]$ is scalar for any scalar $d$-tuple $c$.
\end{lemma}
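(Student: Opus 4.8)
The plan is to derive both claims directly from the intertwining preserving property of $F$, without any of the machinery developed later.

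For the first claim I would observe that a scalar point $a=(a^1,\dots,a^d)$ has each component $a^r\in\C I$, so $La^r=a^rL$ for every $r$ and every $L\in\bh$; that is, $La=aL$. Applying intertwining preservation with $x=y=a$ gives $LF(a)=F(a)L$ for all $L\in\bh$, and since an operator commuting with every element of $\bh$ is a scalar, $F(a)\in\C I$.

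For the second claim I would combine the first with openness of $\O$. If $c$ is a scalar $d$-tuple then, $\O$ being open and containing $a$, we have $a+tc\in\O$ for all sufficiently small $|t|$, and $a+tc$ is again a scalar point; by the first claim $F(a+tc)$ is a scalar, say $\beta(t)$. Since $F$ is Fr\'echet holomorphic (Proposition~\ref{prb1}(i)),
\[
DF(a)[c]\=\lim_{t\to0}\frac{F(a+tc)-F(a)}{t}\=\lim_{t\to0}\frac{\beta(t)-\beta(0)}{t}
\]
exists, and as a limit of scalars it is a scalar. Alternatively, one can avoid differentiating along a line: for small $|t|$ the matrix $\bsbm a & tc \\ 0 & a\esbm$ lies in $\wsO_2$ by the consequence of \eqref{eqa1} recorded just before Proposition~\ref{prb1}, Proposition~\ref{prb1}(ii) shows $\wF$ of it equals $\bsbm F(a) & t\,DF(a)[c] \\ 0 & F(a)\esbm$, and since $a$ and $c$ are scalar the operator $L\oplus L$ intertwines this matrix with itself for every $L\in\bh$; Proposition~\ref{prb2} then forces $DF(a)[c]$ to commute with all of $\bh$, hence to be scalar.

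I do not expect a genuine obstacle here — this is essentially a warm-up. The only points needing a word of justification are that the commutant of $\bh$ is $\C I$ and that $a+tc$ remains in $\O$ for small $t$, both of which are immediate.
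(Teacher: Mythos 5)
Your argument is correct and matches the paper's proof essentially verbatim: $La=aL$ for all $L\in\bh$ forces $F(a)$ to be central and hence scalar, and then $DF(a)[c]$ is a limit of difference quotients of scalars. The alternative argument via Proposition~\ref{prb1}(ii) and Proposition~\ref{prb2} is a valid extra, but the main route is precisely the one the paper takes.
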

\begin{proof}
For any $L \in \bh$, since $La = aL$, we have $LF(a) = F(a) L$.
Therefore $F(a)$ is a scalar. For all $t$ sufficiently close to $0$, $a + tc$ is in $\O$
and $F(a+tc)- F(a)$ is scalar, therefore $DF(a)[c]$ is scalar.
\end{proof}

\begin{lemma}
Suppose $a = (a^1, \dots, a^d)$ is a $d$-tuple of scalar matrices in $\O$. Then
$DF(a)[h]$ is a linear polynomial in $h$.
\label{lemc3}
\end{lemma}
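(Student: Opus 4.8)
The plan is to read off $DF(a)[h]$ as the $(1,2)$ corner of $\wF$ applied to a $2\times 2$ block matrix, and then to exploit the intertwining preservation of $\wF$ under a cleverly chosen conjugation. By Lemma~\ref{lemc2} we have $F(a) = \lambda I$ for a scalar $\lambda$, and by Proposition~\ref{prb1}(ii),
\[
\wF\Bigl(\bbm a & h \\ 0 & a\ebm\Bigr) \ = \ \bbm \lambda & DF(a)[h] \\ 0 & \lambda\ebm
\]
for every $h \in \A^d$ of sufficiently small norm --- small enough that $\bbm a & h \\ 0 & a\ebm$, being an upper triangular operator matrix with entries in $\A$ that is close to $a \oplus a$, lies in $\wsO_2$. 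Now fix $N \in \A$ with $\|N\| < 1$; since $\A$ is unital and norm closed, $I+N$ is invertible in $\A$. Because $a$ is a tuple of scalars, a direct computation gives the intertwining
\[
\bbm I+N & 0 \\ 0 & I\ebm \bbm a & h \\ 0 & a\ebm
\ = \
\bbm a & (I+N)h \\ 0 & a\ebm \bbm I+N & 0 \\ 0 & I\ebm ,
\]
and after shrinking $\|h\|$ if needed both block matrices lie in $\wsO_2$. Feeding this identity into the intertwining preservation of $\wF$ (Proposition~\ref{prb2}) and comparing the $(1,2)$ corners (applying Proposition~\ref{prb1}(ii) again to the right-hand matrix, whose upper corner is then $DF(a)[(I+N)h]$) produces
\[
DF(a)[Nh^1,\dots,Nh^d] \ = \ N\cdot DF(a)[h^1,\dots,h^d].
\]

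Next I would remove the smallness and invertibility restrictions: replacing $h$ by $th$ and $N$ by $tN$ for small $t > 0$ and invoking linearity of the Fr\'echet derivative $DF(a)[\cdot]\colon \A^d \to \bh$ (which exists by Proposition~\ref{prb1}(i)) shows the displayed covariance holds for every $N \in \A$ and every $h \in \A^d$. Then I decompose $h = \sum_{s=1}^d h^{(s)}$, where $h^{(s)}$ is the tuple with $h^s$ in the $s$-th slot and zeros elsewhere, so $DF(a)[h] = \sum_s DF(a)[h^{(s)}]$. Letting $e_s$ denote the tuple with $I$ in slot $s$ and $0$ elsewhere, we have $h^{(s)} = (h^s e_s^1,\dots,h^s e_s^d)$, so the covariance identity gives $DF(a)[h^{(s)}] = h^s\, DF(a)[e_s]$. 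Since $e_s$ is a $d$-tuple of scalar matrices, Lemma~\ref{lemc2} tells us $\alpha_s := DF(a)[e_s]$ is a scalar, hence $h^s\alpha_s = \alpha_s h^s$ and
\[
DF(a)[h] \ = \ \sum_{s=1}^d \alpha_s h^s ,
\]
a homogeneous free polynomial of degree one.

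The one genuinely new idea is in the first paragraph: recognizing that conjugating the ``derivative block'' $\bbm a & h \\ 0 & a\ebm$ by $\bbm I+N & 0 \\ 0 & I\ebm$ and passing this intertwining through $\wF$ manufactures exactly the left multiplication covariance of $DF(a)$. The rest is bookkeeping --- checking that all the block matrices that appear stay inside the envelope of $\O$ where $\wF$ is defined, and the elementary scaling arguments. (A mirror conjugation by $\bbm I & 0 \\ 0 & I+N\ebm$ gives the right covariance $DF(a)[hN] = DF(a)[h]\,N$ as well, though it is not needed for this lemma.)
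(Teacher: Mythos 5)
Your proof is correct and lands on the paper's own conclusion $DF(a)[h]=\sum_s \alpha_s h^s$, but it takes a slightly different route. The paper sets up a single one-sided intertwining
\[
\begin{bmatrix} 1 & 0 \\ 0 & h_1\end{bmatrix}
\begin{bmatrix} a & \varepsilon h \\ 0 & a\end{bmatrix}
\;=\;
\begin{bmatrix} a & \varepsilon J \\ 0 & a\end{bmatrix}
\begin{bmatrix} 1 & 0 \\ 0 & h_1\end{bmatrix}
\]
(with $J=(1,0,\dots,0)$ and $h=(h_1,0,\dots,0)$), applies Proposition~\ref{prb2} to get $DF(a)[h]=DF(a)[J]\,h_1$ in one stroke, then invokes Lemma~\ref{lemc2} to identify $DF(a)[J]$ as a scalar. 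You instead conjugate the bidiagonal block by the invertible $\left[\begin{smallmatrix} I+N & 0 \\ 0 & I\end{smallmatrix}\right]$ to manufacture the left-covariance $DF(a)[Nh]=N\,DF(a)[h]$, extend it from $\|N\|<1$ to all of $\A$ by linearity, and only then specialize $N=h^s$, $h=e_s$. Both proofs exploit the same core trick --- an intertwining of the $2\times 2$ block pulls the operator $h^s$ out of the direction, reducing to a derivative in a scalar direction, which Lemma~\ref{lemc2} makes scalar. The paper's argument is a bit more economical because Proposition~\ref{prb2} already allows arbitrary (non-invertible) intertwiners $L$, so there is no need to perturb $N$ to an invertible $I+N$ and then remove the perturbation by linearity; your version has the small advantage that the covariance identity $DF(a)[Nh]=N\,DF(a)[h]$ is stated cleanly and is reusable. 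Your bookkeeping about shrinking $h$ so all three block matrices remain in the envelope where $\wF$ is defined is the right care to take and matches the $\varepsilon$ in the paper's proof.
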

\begin{proof}
First assume that 
 $h = (h_1, 0, \dots , 0)$.
Let $\vare > 0$ be such that the closed $ \max(\vare,  \vare \| h_1\|)$ ball around $a \oplus a$ is in $\wO$.
Let $J = (1, 0, \dots, 0)$ be the scalar $d$-tuple with first entry $1$, the others $0$.
As
\[
\bbm 	1 & 0 \\ 0 & h_1	\ebm
\
\bbm 	a & \vare h \\ 0 & a	\ebm
\=
\bbm 	a & \vare J \\ 0 & a	\ebm \
\bbm 	1 & 0 \\ 0 & h_1	\ebm,
\]
we get from Proposition \ref{prb2} that
\be
\label{eqc11}
DF(a)[h] = DF(a)[J]\  h_1 .
\ee
By Lemma \ref{lemc2}, $DF(a)[J]$ is a scalar, $c_1$ say, so we get
\[
DF(a) [ (h_1, 0, \dots, 0)] \= c_1 h_1 .
\]
Permuting the coordinates and using the fact that $DF(a)[h]$ is linear in $h$,
we get that for any $h$
\[
DF(a) [h] \= \sum_{r=1}^d c_r h_r
\]
for some constants $c_r$.
\end{proof}

\section{Derivatives of NC functions are free polynomials}

The derivatives are defined inductively, by
{\small
\begin{eqnarray}
\nonumber
\lefteqn{
 D^k F(x) [h_1, \dots, h_k] \= } \\
 & \lim_{\lambda \to 0} \frac{1}{\lambda} \left( D^{k-1}F(x + \lambda h_k)[h_1,\dots, h_{k-1}] - D^{k-1}F(x)[h_1, \dots, h_{k-1}] \right).
\label{eqc2}
\end{eqnarray} 
\att}
\hspace{-0.26em}The $k^{\rm th}$ derivative is $k$-linear in $h_1, \dots, h_k$.
To extend Lemma \ref{lemc3} to higher derivatives, we need to introduce some other
operators, called nc difference-differential operators in \cite{kvv14}.

$\Delta^k F( x_1, \dots, x_{k+1}) [ h_1, \dots, h_k]$ is defined to be the $(1, k+1)$ entry in the matrix
\be
\label{eqc3}
\wF \left( \bbm
			x_1 & h_1 & 0 & 0 & \dots & 0 \\ 0 & x_2 & h_2 & 0 & \dots & 0 \\
			\vdots & \vdots & \vdots & \vdots &  & \vdots \\
			0 & 0 & 0 & 0 & \dots & x_{k+1}
		\ebm  \right)
\ee
We shall show in 
Lemma \ref{lemc5} that it is $k$-linear in $[h_1, \dots, h_k]$.

The $\Delta^k$ occur when applying $\wF$ to a bidiagonal matrix.
This is proved in \cite{kvv14}*{Thm. 3.11}.
\begin{lemma}
\label{lemap181}
Let $F$ be NC.
Then
{\small
\se
\begin{eqnarray}
\label{eqap181}
\lefteqn{
\wF \left(
\bbm
x_1 & h_1 & 0 & \dots & 0 \\
0 & x_2 & h_2 & \dots & 0 \\
\vdots & \vdots & \vdots & \dots &  \vdots \\
0 & 0 & 0 & \dots & x_{k+1} 
\ebm \right) } &&\\
\label{eqap182}
&&=
\bbm F(x_1) & \Delta^1 F(x_1, x_2) [h_1]  
& \dots & \Delta^k F(x_1, \dots, x_{k+1})[h_1, \dots, h_k] \\
0 & F(x_2)  & \dots & \Delta^{k-1} F(x_2, \dots, x_{k+1})[h_2, \dots, h_k] \\
\vdots & \vdots & \dots & \vdots \\
0 & 0 & \dots & F(x_{k+1})
\ebm.
\end{eqnarray}
\att\att
}
\end{lemma}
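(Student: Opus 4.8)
The plan is to read the block decomposition directly off the intertwining-preserving property of $\wF$ (Proposition~\ref{prb2}), applied to the coordinate inclusions and projections attached to the block-upper-triangular shape of the bidiagonal matrix; no appeal to \cite{kvv14} is really needed. Write $X$ for the matrix on the left-hand side of \eqref{eqap181}, viewed as an operator on $\h^{(k+1)}$, and for $1\le i\le j\le k+1$ let $X_{[i:j]}$ be the bidiagonal sub-block of $X$ on $\h^{(j-i+1)}$ with diagonal $x_i,\dots,x_j$ and superdiagonal $h_i,\dots,h_{j-1}$ (so $X_{[1:k+1]}=X$ and $X_{[i:i]}=x_i$). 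I assume, as is the case whenever the lemma gets used --- e.g.\ near a scalar point $a$, where $X$ is a small perturbation of $\oplus_{j=1}^{k+1}a\in\wsO_{k+1}$ --- that $X$ and all the $X_{[i:j]}$ lie in $\wO$, so that $\wF$ is defined on each of them.

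First I would record two elementary intertwinings. Fix $1\le m\le k$, let $\iota_m:\h^{(m)}\to\h^{(k+1)}$ be the isometric inclusion onto the first $m$ coordinates and $Q_m:\h^{(k+1)}\to\h^{(k+1-m)}$ the coordinate projection onto the last $k+1-m$ coordinates. Since $X$ is block upper triangular with top-left corner $X_{[1:m]}$ and bottom-right corner $X_{[m+1:k+1]}$, a one-line matrix computation gives $X\iota_m=\iota_m X_{[1:m]}$ and $Q_m X=X_{[m+1:k+1]}Q_m$. Feeding each of these into Proposition~\ref{prb2} yields
\[
\iota_m\,\wF\big(X_{[1:m]}\big)\=\wF(X)\,\iota_m,
\qquad
Q_m\,\wF(X)\=\wF\big(X_{[m+1:k+1]}\big)\,Q_m .
\]
The first identity says the top-left $m\times m$ corner of $\wF(X)$ is $\wF(X_{[1:m]})$ and the block strictly below it vanishes; the second says the block strictly to the left of the bottom-right $(k+1-m)\times(k+1-m)$ corner vanishes and that corner is $\wF(X_{[m+1:k+1]})$. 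Letting $m$ run over $1,\dots,k$ shows $\wF(X)$ is block upper triangular.

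To identify a fixed entry, I would combine the two relations: for $1\le i\le j\le k+1$, restrict $\wF(X)$ to its top-left $j\times j$ corner (the inclusion relation with $m=j$), then restrict that to its bottom-right $(j-i+1)\times(j-i+1)$ corner (the projection relation in parameter $i-1$). This identifies the $(i,j)$ block of $\wF(X)$ with the $(1,\,j-i+1)$ block of $\wF(X_{[i:j]})$, which is by definition \eqref{eqc3} equal to $\Delta^{j-i}F(x_i,\dots,x_j)[h_i,\dots,h_{j-1}]$. Taking $i=j$ gives the diagonal entries $\wF(x_j)=F(x_j)$, taking $i=1$ gives precisely the first row of \eqref{eqap182}, and the general entry follows the same pattern; this proves \eqref{eqap181}--\eqref{eqap182}.

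The computation itself is routine; the only thing that needs care is the membership bookkeeping --- making sure every sub-block fed to $\wF$ actually lies in $\wO$, which is why this lemma is invoked only in settings (such as neighbourhoods of scalar points) where that is transparent --- together with keeping the index shifts straight in the final combination step.
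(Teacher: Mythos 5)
Your proof is correct and rests on the same two intertwinings the paper uses — a coordinate block inclusion and a coordinate block projection fed into Proposition~\ref{prb2} — together with the definition \eqref{eqc3} of $\Delta^k$. The only difference is presentational: the paper wraps the argument in an induction on $k$, so that one inclusion and one projection settle all entries except $(1,k+1)$, whereas you unroll the induction by chaining a single inclusion with a single projection to pin down each $(i,j)$ entry directly.
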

\begin{proof}
We will prove this by induction. For $k=1$, it is the definition of $\Delta^1$.
Assume it is proved for $k-1$.
Let $I_k$ denote the $k$-by-$k$ matrix with diagonal entries the identity, and off-diagonal entries $0$.
As
{\small
\[
\bbm
x_1 & h_1 & 0 & \dots & 0 \\
0 & x_2 & h_2 & \dots & 0 \\
\vdots & \vdots & \vdots & \dots &  \vdots \\
0 & 0 & 0 & \dots & x_{k+1} 
\ebm \
\bbm
I_k \\
0 \ebm_{(k+1) \times k } \=
\bbm
I_k \\
0 \ebm_{(k+1) \times k } \
\bbm
x_1 & h_1 & 0 & \dots & 0 \\
0 & x_2 & h_2 & \dots & 0 \\
\vdots & \vdots & \vdots & \dots &  \vdots \\
0 & 0 & 0 & \dots & x_{k} 
\ebm
\]
}
we conclude that the first $k$ columns of \eqref{eqap181} agree with those
of \eqref{eqap182}.
Similarly, intertwining by $\bbm 0 & I_k \ebm$ we get that the bottom $k$ rows agree.
Finally, the $(1,(k+1))$ entry is the definition of $\Delta^k$.
\end{proof}

A key property we need is that $\Delta^k$ is $k$-linear in the directions.
In the nc case, this is proved in \cite{kvv14}*{3.5}.
\begin{lemma}
\label{lemc5}
Let $x_1, \dots, x_{k+1} \in \O$. Then $\Delta^k F(x_1, \dots, x_{k+1})[h_1, \dots, h_k]$
is $k$-linear in $h_1, \dots, h_{k}$.
\end{lemma}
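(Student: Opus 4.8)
The plan is to reduce the $k$-linearity of $\Delta^k F$ to the intertwining-preserving property of $\wF$ applied to the bidiagonal matrix in \eqref{eqc3}, exactly as one does for the first two derivatives. Fix an index $i$ with $1 \le i \le k$; by symmetry of the argument it suffices to prove linearity in the $i$-th slot, i.e.\ that
\[
\Delta^k F(x_1,\dots,x_{k+1})[h_1,\dots, \alpha h_i + \beta h_i', \dots, h_k]
= \alpha\, \Delta^k F(\cdots)[\dots,h_i,\dots] + \beta\, \Delta^k F(\cdots)[\dots,h_i',\dots]
\]
for scalars $\alpha,\beta$. Homogeneity (the scalar case $h_i \mapsto \alpha h_i$) is the easier half: conjugating the bidiagonal matrix by the invertible diagonal $S = \mathrm{diag}(1,\dots,1,\alpha^{-1},1,\dots,1)$ (with $\alpha^{-1}$ in position $i+1$, say) leaves all diagonal blocks $x_j$ fixed and multiplies the super-diagonal entry $h_i$ by $\alpha$ while leaving the other $h_j$ unchanged; then $\wF$ of the conjugated matrix equals $S^{-1}\wF(\text{original})S$ by the well-definedness in Proposition~\ref{prb2}, and reading off the $(1,k+1)$ entry gives the scaling. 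One must only check $\alpha \neq 0$ separately from $\alpha = 0$ (the latter being trivial since then the super-diagonal is no longer ``full,'' but the $(1,k+1)$ entry of the relevant product telescopes to $0$), and that the conjugated matrix still lies in $\wO$, which holds because conjugation by an invertible $S:\h^{(k+1)}\to\h^{(k+1)}$ followed by a unitary identification keeps us inside the similarity envelope.

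The additivity half is where the real content lies, and the trick is the standard one from \cite{kvv14}*{3.5}: embed two copies of the bidiagonal matrix — one with $h_i$, one with $h_i'$ — and one with $h_i + h_i'$ into a larger ambient situation and relate them by an intertwining. Concretely, I would take the $(k+1)\times(k+1)$ bidiagonal matrix $X$ with super-diagonal $h_1,\dots,h_i,\dots,h_k$ and the matrix $X'$ with super-diagonal $h_1,\dots,h_i',\dots,h_k$, form $\wF$ of $X \oplus X'$ (a $2(k+1)$-block diagonal operator, which by Proposition~\ref{prb2} is $\wF(X)\oplus \wF(X')$ after a unitary identification), and exhibit an intertwining $L$ with the bidiagonal matrix $Y$ whose super-diagonal has $h_i + h_i'$ in the $i$-th slot, such that comparing $(1,k+1)$-type entries of $L\,\wF(X\oplus X') = \wF(Y)\,L$ yields the additivity. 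The operator $L$ is block-structured: it is the identity on the first $i$ rows/columns drawn from the $X$-copy, glues the two copies together at the $i$-th level, and is the identity on the last $k+1-i$ rows/columns drawn from one copy. Verifying $LX_{\oplus} = Y L$ (or the transposed version, whichever direction makes the bookkeeping cleaner) is a finite block computation; it is the analogue of the $2\times 2$ identity displayed in the proof of Lemma~\ref{lemc3}. Once the intertwining is in place, Proposition~\ref{prb2} does all the work, and one extracts the desired scalar/operator entry from the block matrix identity.

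The main obstacle I anticipate is purely organizational: setting up the correct intertwining operator $L$ (its exact block pattern and which rows come from which copy) and then correctly identifying which entry of the matrix identity $L\,\wF(X\oplus X') = \wF(Y)\,L$ encodes the additivity of $\Delta^k$ in the $i$-th variable — the combinatorics of which super-diagonal product of $x$'s and $h$'s lands in the $(1,k+1)$ position is where it is easy to make an indexing error. A secondary technical point is ensuring at every step that the enlarged block matrices $X\oplus X'$, $Y$, and the conjugates used for homogeneity actually lie in $\wO = \cup_n \wO_n$, so that $\wF$ is defined on them; this follows from the NC-domain hypothesis on $\O$ together with the definition of the similarity envelope, but it should be stated explicitly. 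Given Lemma~\ref{lemap181} (which expresses $\wF$ on a bidiagonal matrix in terms of the $\Delta^j$'s), one can alternatively phrase the whole argument as: restrict the already-known intertwining identities for $\wF$ to the bidiagonal picture and read off the linearity of the entries — this may be the shortest route and is essentially the proof in \cite{kvv14}.
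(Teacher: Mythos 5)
Your high-level strategy—reduce $k$-linearity of $\Delta^k$ to intertwining identities applied to the bidiagonal matrix in \eqref{eqc3}—is the same as the paper's, but both halves of your argument as written have real problems, and the additivity half in particular would not go through.

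\textbf{Homogeneity.} Conjugating by $S=\mathrm{diag}(1,\dots,1,\alpha^{-1},1,\dots,1)$ with the scalar in slot $i+1$ does \emph{not} fix the other super-diagonal entries: since the $(j,j+1)$ entry of $S^{-1}XS$ is $S^{-1}_{jj}h_jS_{j+1,j+1}$, your $S$ rescales $h_i$ by $\alpha^{-1}$ \emph{and} $h_{i+1}$ by $\alpha$, so for $2\le i\le k-1$ the resulting identity is a skew relation $\Delta^k[\dots,\alpha^{-1}h_i,\alpha h_{i+1},\dots]=\Delta^k[\dots,h_i,h_{i+1},\dots]$ rather than homogeneity in slot $i$. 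The fix is to put $\alpha^{-1}$ in slots $1$ through $i$ (a block of equal entries), which rescales only $h_i$; the paper's version is the special case $i=1$ with $\mathrm{diag}(c,1,\dots,1)$. Also, you don't need the caveat about $\alpha=0$: once additivity is in hand (or by a continuity/limiting argument) it is automatic.

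\textbf{Additivity — this is the genuine gap.} You propose to intertwine $X\oplus X'$ (size $2(k+1)$) with the single bidiagonal $Y$ carrying $h_i+h_i'$ via one operator $L$ satisfying $L(X\oplus X')=YL$ or $(X\oplus X')L=LY$. No such block-structured $L$ exists in general. Already for $k=1$, $i=1$: writing $L=\bsbm L_1\\L_2\esbm$ and requiring $(X\oplus X')L=LY$ forces $XL_1=L_1Y$ and $X'L_2=L_2Y$ separately; an upper-triangular $L_1=\bsbm a&b\\0&d\esbm$ with scalar entries leads to $dh_1=a(h_1+h_1')$ (taking $b=0$), which for generic $h_1,h_1'$ forces $a=d=0$. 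The direct sum has too much ``redundant'' spectrum: $X\oplus X'$ carries two disjoint copies of each $x_j$, and nothing ties the two tails $x_2,\dots,x_{k+1}$ together so that both $h_i$ and $h_i'$ feed into the \emph{same} $(1,k+1)$-type entry. The paper avoids this by building a single $(p+k+q)\times(p+k+q)$ bidiagonal-type matrix $Y$ whose diagonal repeats $x_1$ exactly $p$ times and $x_{k+1}$ exactly $q+1$ times, but keeps only \emph{one} copy of the middle $x_2,\dots,x_{k+1}$ chain; $h_1$ sits at $(1,p+1)$ and $h_1'$ at $(p,p+1)$, so both lead into the \emph{shared} tail. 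Then three different intertwiners $L,L',L''$ (selecting the first row, the $p$-th row, and their sum, respectively) give $LY=XL$, $L'Y=X'L'$, $L''Y=X''L''$, from which one reads off $\Delta^k[h_1,\dots]$, $\Delta^k[h_1',\dots]$, and $\Delta^k[h_1+h_1',\dots]$ as three entries of the \emph{same} $\wF(Y)$, forcing additivity. Linearity in the $i$-th slot for $i\ge2$ is then deduced by choosing $p,q$ so that $Y$ splits as a $2\times2$ block upper-triangular matrix with $h_i$ in the corner of the off-diagonal block, reducing to the $\Delta^1$ case.

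Your closing remark that one could ``restrict the already-known intertwining identities'' via Lemma~\ref{lemap181} is too vague to count as a proof: Lemma~\ref{lemap181} describes the shape of $\wF$ on bidiagonals but does not by itself give linearity of the corner entry. In short, the idea of using intertwinings is right, but the specific constructions you wrote down (a single-slot scaling diagonal, and a direct sum $X\oplus X'$) are not the ones that make the argument close, and the obstacle is structural rather than merely organizational.
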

\begin{proof} Let us write $\Delta^k[h_1, \dots, h_k]$ for $\Delta^k F(x_1, \dots, x_{k+1})[h_1, \dots, h_k]$.

(i) First, we show this is linear with respect to $h_1$. Homogeneity follows from observing that
\[
\bbm
x_1 & c h_1 & 0 & \dots \\
0 & x_2 & h_2 & \dots \\
\vdots & \vdots & \vdots & \dots  \\
\ebm 
\bbm
c & 0 & 0   \\
0 & 1 &    \\
\vdots & \vdots & \ddots  
\ebm 
\=
\bbm
c & 0 & 0   \\
0 & 1 &    \\
\vdots & \vdots & \ddots  
\ebm 
\bbm
x_1 & h_1 & 0 & \dots \\
0 & x_2 & h_2 & \dots \\
\vdots & \vdots & \vdots & \dots  \\
\ebm 
\]
and using the intertwining preserving property Prop. \ref{prb2}.

To show additivity, let $p \geq 1$ and $q \geq 0$ be integers.
Let $Y$ be the $(p+k+q)\times(p+k+q)$ matrix
\[
Y \= 
\bbm
x_1 & 0 & \dots && h_1 & \dots && 0 & \dots \\
0 & x_1 & \dots && 0 & \dots && 0 & \dots \\
&  & \ddots &&  & \dots && & \dots \\
0 & \dots && x_1 & h_1'  & \dots && 0 & \dots \\
&&&0 & x_2 & h_2 & \dots & 0 & \dots\\
&&&&&\ddots &&&\\
&&&&& &x_{k+1}&0&\\
&&&&& &&x_{k+1}&\\
&&&&& &&&\ddots
\ebm .
\]
Let $L$ be the $(k+1) \times (p+k+q)$ matrix
\[
L \= \bbm
1 & 0 & \dots& 0 & \dots & & &0 & \dots &0 \\
0 & 0 & \dots& 1 & 0 & \dots & &0 &&\\
0 & \dots && 0 & 1 &&  &0&&\\
0 & \dots& &  &  &\ddots&  &&&\\
0 & \dots& &  &  && 1 &0& \dots &1\\
\ebm
\]
Let $X$ be the $(k+1)\times(k+1)$ matrix
\[
X \= 
\bbm
			x_1 & h_1 & 0 & 0 & \dots & 0 \\ 0 & x_2 & h_2 & 0 & \dots & 0 \\
			\vdots & \vdots & \vdots & \vdots &  & \vdots \\
			0 & 0 & 0 & 0 & \dots & x_{k+1}
		\ebm
\]
Then
\[
LY \=
\bbm
			x_1 & \dots &  h_1 & 0 &   \dots &&0&\dots  \\
			 0 & \dots &  x_2 & h_2 & \dots &  &&  \\
			 &  & &  \ddots & &\\
			0 & \dots & 0 &   \dots & &x_{k+1} & 0& \dots & x_{k+1}
		\ebm
\= XL .
\]
Therefore the $(1,p+k+q)$ entry of $\wF(Y)$ is $\Delta^k[h_1, \dots , h_k]$.

Let $L'$ be the matrix obtained by replacing the first row of $L$ with the row that is $1$ in the $p^{\rm th}$ entry and $0$ elsewhere. Then $L'Y =  X' L'$, where $X'$ is $X$ with $h_1$ replaced by the $d$-tuple $h_1'$.
This gives that the $(p, p+k+q)$ entry of $\wF(Y)$ is $\Delta^k[h_1', \dots , h_k]$.

Now let $L''$ be the matrix that replaces the first row of $L$ with a $1$ in both the first and $p^{\rm th}$ entry, and $X''$ be $X$ with $h_1$ replaced by $h_1 + h_1'$. Then $L'' Y = X'' L''$, and we conclude that
\[
\Delta^k[h_1, \dots , h_k] + \Delta^k[h_1', \dots , h_k] \= 
\Delta^k[h_1 + h_1', \dots , h_k] .
\]
Therefore $\Delta^k$ is linear in the first entry.

(ii) To prove that $\Delta^k$ is linear in the $i^{\rm th}$ entry, for $i \geq 2$,  choose $p,q$ so that
\[
p + i -1 \= k - i + 1 + q .
\]
Then $Y$ decomposes into a $2 \times 2$ block of $(p+i-1) \times (p+i-1)$ matrices.
\[
Y \= 
\bbm A & B \\
0 & D 
\ebm .
\]
Moreover $B$ is the matrix whose bottom left-hand entry is $h_i$, and everything else is $0$.
Therefore 
\[
\wF(Y) \=
\bbm \wF(A) & \Delta^1 \wF(A, D)[B] \\
0 & \wF(D)
\ebm , 
\]
and $ \Delta^1 \wF(A, D)[B]$ is linear in $B$ (and hence in $h_i$) by part (i).
Therefore the $(1,p+k+q)$ entry of $\wF(Y)$, which we have established is
$\Delta^k[h_1, \dots, h_k]$, is linear in $h_i$, as desired.
\end{proof}

\begin{lemma}
Suppose $a = (a_1, \dots, a_{k+1})$
 is a $(k+1)$-tuple of points  in $\O$, each of which is a $d$-tuple of scalars. Then
$\Delta^kF(a_1, \dots, a_{k+1})[h_1, \dots, h_{k}]$ is a free polynomial in  $h$, homogeneous of degree
$k$.
\label{lemc6}
\end{lemma}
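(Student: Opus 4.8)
The plan is to reduce $\Delta^k F(a_1,\dots,a_{k+1})[h_1,\dots,h_k]$ to a finite linear combination of "word" evaluations $h_1^{r_1}h_2^{r_2}\cdots h_k^{r_k}$ with scalar coefficients, by the same kind of intertwining trick used in Lemma \ref{lemc3}, iterated one direction at a time. By Lemma \ref{lemc5} the expression is $k$-linear in $(h_1,\dots,h_k)$, so it suffices to prove it is a homogeneous degree-$k$ free polynomial when each $h_i$ is supported in a single coordinate, say $h_i = (0,\dots,0,(h_i)_{r_i},0,\dots,0)$ with the nonzero entry in slot $r_i$; then sum over $r_1,\dots,r_k \in \{1,\dots,d\}$.

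First I would handle the case where, additionally, each $(h_i)_{r_i}$ equals a scalar multiple of the identity, say $(h_i)_{r_i} = t_i\cdot 1$. Then the bidiagonal matrix in \eqref{eqc3} has all scalar entries, so by Lemma \ref{lemc2} applied on $\wsO_{k+1}$ (or rather its similarity version $\wO$, noting $\wF$ is defined there and is intertwining preserving by Proposition \ref{prb2}) the value $\Delta^k F(a_1,\dots,a_{k+1})[h_1,\dots,h_k]$ is a scalar, call it $c(t_1,\dots,t_k)$; by $k$-linearity (Lemma \ref{lemc5}) it is multilinear in $(t_1,\dots,t_k)$, hence $c(t_1,\dots,t_k) = c \cdot t_1\cdots t_k$ for a single scalar $c = c(r_1,\dots,r_k)$ depending on the chosen coordinates. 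Next, for general $h_i$ supported in slot $r_i$, I would promote each scalar entry to an arbitrary operator one at a time using an intertwining of the form used in Lemma \ref{lemc3}: conjugating the bidiagonal matrix by a block-diagonal matrix $\mathrm{diag}(1,\dots,1,(h_i)_{r_i},\dots,(h_i)_{r_i})$ (identity in the first $i$ diagonal slots, $(h_i)_{r_i}$ in the last $k+1-i$ slots) relates $\Delta^k$ with $h_i$ to $\Delta^k$ with $h_i$ replaced by the coordinate-$r_i$ unit tuple $J_{r_i}$, multiplied on the appropriate side by $(h_i)_{r_i}$. Doing this for $i=1$ pulls $(h_1)_{r_1}$ out on the left; for $i = k$ pulls $(h_k)_{r_k}$ out on the right; and for intermediate $i$ one splits the bidiagonal matrix as a $2\times2$ block and uses that $\Delta^1\wF(A,D)[B]$ is linear in $B$ and intertwines correctly, exactly as in part (ii) of Lemma \ref{lemc5}, so that $(h_i)_{r_i}$ gets inserted in the correct position in the word. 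Iterating over all $i$ shows
\[
\Delta^k F(a_1,\dots,a_{k+1})[h_1,\dots,h_k] \= c(r_1,\dots,r_k)\, (h_1)_{r_1}(h_2)_{r_2}\cdots (h_k)_{r_k},
\]
with $c(r_1,\dots,r_k)$ the scalar from the all-scalar case. Summing over the coordinates and invoking $k$-linearity once more yields the claimed homogeneous free polynomial.

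The main obstacle I anticipate is bookkeeping: making sure that when $(h_i)_{r_i}$ is extracted via the block-diagonal conjugation, it lands on the correct side of the partial word (left of everything already extracted to its left, right of everything to its right), and that the inductive order of extractions is consistent — i.e. that extracting $(h_1)_{r_1}$ first, then $(h_2)_{r_2}$, etc., genuinely produces the monomial $(h_1)_{r_1}\cdots(h_k)_{r_k}$ in the given order rather than some permutation. The cleanest way to manage this is to argue the general-$h_i$ reduction one index at a time while keeping the other $h_j$'s as unit-tuple place-holders $J_{r_j}$, using Lemma \ref{lemc5}(i)-(ii)'s $2\times2$-block mechanism at each step; the all-scalar base case then only needs to supply the single constant. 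A secondary point to check is that the relevant conjugated bidiagonal matrices lie in $\wO$ (so $\wF$ is defined on them) — this follows because $a_1,\dots,a_{k+1}$ are scalar points, hence the bidiagonal matrix with small off-diagonal entries is a small perturbation of $\oplus a_j \in \wsO_{k+1} \subseteq \wO_{k+1}$, and conjugation by an invertible keeps us in $\wO$; one should also note $\Delta^k$ and the needed intertwinings only involve finitely many operators, so no continuity or topological input beyond what Proposition \ref{prb2} already gives is required.
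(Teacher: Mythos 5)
Your proposal is essentially the same approach as the paper's: reduce by $k$-linearity (Lemma~\ref{lemc5}) to the case where each direction $h_i$ has a single non-zero coordinate $H_i e_{j_i}$; use a block-diagonal intertwining on the bidiagonal matrix defining $\Delta^k$ to pull the operators $H_i$ out; and observe that what remains, with all off-diagonal entries scalar unit tuples $e_{j_i}$, is a scalar. The paper does all of step two in a single stroke by intertwining with $\mathrm{diag}(H_1 H_2 \cdots H_k,\; H_2 \cdots H_k,\; \dots,\; H_k,\; 1)$, which immediately yields $\Delta^k[h_1,\dots,h_k] = H_1 \cdots H_k\, \Delta^k[e_{j_1},\dots,e_{j_k}]$ with the letters in the right order, so there is no bookkeeping problem at all; your one-$H_i$-at-a-time version is correct in spirit but you should check the direction of your extractions. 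With your stated $D_i = \mathrm{diag}(1,\dots,1,\,H_i,\dots,H_i)$ (identity in the first $i$ slots), the intertwining $D_i\,X' = X_0\,D_i$ forces $H_i$ out on the \emph{right} of the $(1,k{+}1)$ entry for \emph{every} $i$, not to the left for $i=1$; so iterating $i=1,2,\dots,k$ produces $\bigl(\cdots((c\,H_1)H_2)\cdots\bigr)H_k$, and this collapses to $c\,H_1H_2\cdots H_k$ only because you have already established that $c = \Delta^k[e_{j_1},\dots,e_{j_k}]$ is a scalar (and hence commutes across). That scalarity is therefore doing quiet but essential work in the iteration, not just in the ``base case.'' Finally, the $2\times 2$-block decomposition you invoke for intermediate $i$ is a red herring: that mechanism (from Lemma~\ref{lemc5}(ii)) proves linearity in slot $i$, but it does not place $H_i$ into the interior of a word; the direct diagonal intertwining already handles every $i$, and the position of $H_i$ in the resulting monomial is determined solely by the iteration order together with the scalarity of $c$. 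Once these two points are cleaned up, your proof is correct and only cosmetically different from the paper's.
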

\begin{proof}
Let us write $\Delta^k[h_1, \dots, h_k]$ for $\Delta^kF(a_1, \dots, a_{k+1})[h_1, \dots, h_{k}]$.
By Lemma \ref{lemc5}, we know that $\Delta^k [h_1, \dots, h_{k}]$ is $k$-linear.
So we can assume that each $h_i$ is a $d$-tuple with only one non-zero entry.
Say $h_i = H_i e_{j_i}$, where $e_{j_i}$is the $d$-tuple that is $1$ in the $j_i$ slot, $0$ else, and
$H_i$ is an operator.

Claim:
\be
\label{eqc8}
\Delta^k[ H_1 e_{j_1}, H_2 e_{j_2} , \dots ] \= H_1 H_2 \dots H_k  \Delta^k[  e_{j_1},  e_{j_2} , \dots, e_{j_k} ].
\ee
This follows from the intertwining
\beq
\lefteqn{\bbm
H_1 H_2 \dots H_k & 0 & 0 & \dots  \\ 
0 &  H_2 \dots H_k & 0 &  \dots & \\
&& \ddots & \\
&&& 1
\ebm
\bbm
a_1 & e_{j_1} & 0 & \dots \\
0 & a_2 & e_{j_2} & \dots \\
&& \ddots &\\
&&&a_{k+1}\ebm} && \\
&&=
\bbm
a_1 & e_{j_1} H_1 & 0 & \dots \\
0 & a_2 & e_{j_2} H_2 & \dots \\
&& \ddots &\\
&&&a_{k+1} \ebm
\bbm
H_1 H_2 \dots H_k & 0 & 0 & \dots  \\ 
0 &  H_2 \dots H_k & 0 &  \dots & \\
&& \ddots & \\
&&& 1
\ebm
\eeq
Let 
\[
X \= 
\bbm
a_1 & e_{j_1} H_1 & 0 & \dots \\
0 & a_2 & e_{j_2} H_2 & \dots \\
&& \ddots &\\
&&&a_{k+1} \ebm
\]
As $X$ is a $d$-tuple of $(k+1)\times(k+1)$ matrices of scalars, it commutes with any
$(k+1)\times(k+1)$ matrix that has a constant operator $L$ on the diagonal.
Therefore $L$ commutes with $\Delta^k[  e_{j_1},  e_{j_2} , \dots, e_{j_k} ]$. As $L$ is arbitrary,
it follows that $\Delta^k[  e_{j_1},  e_{j_2} , \dots, e_{j_k} ]$ is a scalar. 
So from \eqref{eqc8}, we get that 
$\Delta^k[ H_1 e_{j_1}, H_2 e_{j_2} , \dots ] $ is a constant times $H_1 H_2 \dots H_k$,
and by linearity we are done.
\end{proof}

Now we relate $\Delta^k$ to $D^k$.
\begin{lemma}
\label{lemdel}
Let $F$ be NC. Then
\[
\Delta^k F(x, \dots, x) [h, \dots , h] 
\= \frac{1}{k!} D^kF(x)[h, \dots, h] \]
\end{lemma}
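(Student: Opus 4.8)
The plan is to compare the two quantities by examining the behavior of $\widetilde{F}$ on a single large bidiagonal matrix with repeated diagonal entry $x$ and repeated off-diagonal entry $h$, and to show that both $\Delta^k F(x,\dots,x)[h,\dots,h]$ and $\frac{1}{k!}D^kF(x)[h,\dots,h]$ arise as coefficients in the Taylor expansion of $t \mapsto \widetilde{F}\!\left(\bsm x & th \\ 0 & x \esm\right)$ in the scalar parameter $t$. More precisely, consider $G(t) := \widetilde{F}\!\left(\bsm x & th \\ 0 & x \esm\right)$ for $t$ near $0$, which by Proposition \ref{prb1}(ii) (applied with $th$ in place of $h$) equals $\bsm F(x) & t\,DF(x)[h] \\ 0 & F(x) \esm$; but I also want to relate its higher-order structure to $\Delta^k$. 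The cleaner route is: use Fréchet holomorphy to write $F(x+th) = \sum_{j\ge 0} t^j \frac{1}{j!} D^jF(x)[h,\dots,h]$, and on the other hand analyze $\Delta^k$ directly.

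First I would fix the identity $\Delta^1 F(x,x)[h] = DF(x)[h]$, which is immediate from Proposition \ref{prb1}(ii) and the definition \eqref{eqc3}. Then I would set up an induction on $k$. The key mechanism is a ``product rule / Leibniz'' type recursion: differentiating the defining formula \eqref{eqc3} for $\Delta^{k-1}F(x_1,\dots,x_k)[h_1,\dots,h_{k-1}]$ in one of the diagonal slots $x_i$ and using \eqref{eqc2} together with Lemma \ref{lemap181} should express $D$ applied to $\Delta^{k-1}$ in terms of $\Delta^k$. Concretely, I would use the standard nc calculus identity (this is the operator analogue of \cite{kvv14}*{Chapter 3}) that
\[
\Delta^k F(x,\dots,x)[h,\dots,h] \;=\; \lim_{\lambda\to 0}\frac{1}{\lambda}\Bigl(\Delta^{k-1}F(x+\lambda h, x,\dots,x)[h,\dots,h] - \Delta^{k-1}F(x,\dots,x)[h,\dots,h]\Bigr),
\]
obtained by intertwining the $(k+1)\times(k+1)$ bidiagonal matrix appearing in \eqref{eqc3} with the one-parameter family that perturbs only the top-left diagonal block. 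Combined with the analogous statement that $\Delta^{k-1}F(x+\lambda h,x,\dots,x)$ and $\Delta^{k-1}F(x,x+\lambda h,\dots)$ etc.\ differ controllably, summing over which diagonal slot is perturbed recovers the factor $k$: one gets $D\bigl(\Delta^{k-1}F(x,\dots,x)[h,\dots,h]\bigr)[h]$ up to the combinatorial count of slots. Running the induction, the factorial $\frac{1}{k!}$ accumulates exactly as $\frac{1}{k} \cdot \frac{1}{(k-1)!}$.

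An alternative and perhaps more transparent execution is purely via generating functions. Let $t$ be a scalar parameter and consider the $(k+1)\times(k+1)$ bidiagonal matrix $M(t)$ with every diagonal entry $x$ and every super-diagonal entry $th$. By Lemma \ref{lemap181} and $k$-linearity (Lemma \ref{lemc5}), the $(1,k+1)$ entry of $\widetilde F(M(t))$ is $t^k \Delta^k F(x,\dots,x)[h,\dots,h]$. On the other hand, $M(t)$ is intertwined (conjugate, after passing to $\widetilde\Omega$) in a controlled way to objects built from $x+sh$ for various $s$; the classical device is that the resolvent-type expansion of $\widetilde F$ on such a Jordan-block-like matrix reproduces the divided difference, and for a confluent node the divided difference of the analytic function $s\mapsto F(x+sh)$ of order $k$ at the single node $0$ equals $\frac{1}{k!}$ times the $k$-th derivative. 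I expect the main obstacle to be making this ``divided difference'' argument rigorous in the operator setting without invoking the matricial theory: specifically, justifying that $\widetilde F$ evaluated on the confluent bidiagonal matrix is governed by the one-variable holomorphic function $s \mapsto F(x+sh)$ and its Taylor coefficients, which requires carefully chaining the intertwining-preserving property through a family of invertible (not merely unitary) conjugations and controlling convergence via Fréchet holomorphy on the similarity envelope $\widetilde\Omega$. Once that bookkeeping is in place, equating the $(1,k+1)$ entries of $\widetilde F(M(t))$ computed the two ways and reading off the coefficient of $t^k$ yields the claim.
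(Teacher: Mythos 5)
Your second route has a genuine structural obstruction that you gesture at but misdiagnose: the bidiagonal matrix $M(t)$ with \emph{all} diagonal blocks equal to $x$ is a Jordan-block-type operator and is \emph{not} similar to any configuration built from $x+sh$ with varying $s$ (an invertible conjugation cannot split the repeated diagonal block into distinct ones). So one cannot read the confluent divided difference of $s\mapsto F(x+sh)$ off $\widetilde F(M(t))$ by a single similarity. The paper's remedy is to \emph{pre-perturb} the diagonal to distinct entries: take the bidiagonal matrix with diagonal $x, x+\lambda h, \dots, x+k\lambda h$ and constant super-diagonal $h$, which the explicit upper-triangular Toeplitz similarity $T$ with entries $1/(j!\lambda^j)$ conjugates onto $\operatorname{diag}(x, x+\lambda h,\dots,x+k\lambda h)$. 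Applying $\widetilde F$ and Proposition \ref{prb2}, the $(1,k+1)$ entry on the bidiagonal side is $\Delta^kF(x,x+\lambda h,\dots,x+k\lambda h)[h,\dots,h]$ by Lemma \ref{lemap181}, while on the conjugated-diagonal side a direct computation with $T$, $T^{-1}$ produces the $k$-th finite difference $\frac{(-1)^k}{k!\lambda^k}\sum_{j=0}^k(-1)^j\binom{k}{j}F(x+j\lambda h)$. Letting $\lambda\to 0$, the finite difference tends to $\frac{1}{k!}D^kF(x)[h,\dots,h]$ by Fr\'echet holomorphy, and the left side tends to $\Delta^kF(x,\dots,x)[h,\dots,h]$ by continuity.

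Your first route (inducting via a difference-quotient recursion that turns a perturbed diagonal slot of $\Delta^{k-1}$ into a $\Delta^k$) is correct in spirit, but as written it is only a sketch. You would need to prove in the operator setting the identity $\Delta^{k-1}F(x+\lambda h, x,\dots,x)[h,\dots,h]-\Delta^{k-1}F(x,\dots,x)[h,\dots,h]=\lambda\,\Delta^kF(x+\lambda h, x,\dots,x)[h,\dots,h]$ (obtainable by the same block-intertwining device as Lemma \ref{lemap181}), and then telescope through all $k$ slots to compare $\Delta^{k-1}F(x+\lambda h,\dots,x+\lambda h)$ with $\Delta^{k-1}F(x,\dots,x)$, carefully passing each of the $k$ resulting difference quotients to the same limit $\Delta^kF(x,\dots,x)[h,\dots,h]$; combining with the inductive hypothesis then yields the extra factor $k$. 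That bookkeeping is precisely what the single Toeplitz conjugation in the paper packages away, which is why the paper's proof is shorter and avoids the induction altogether.
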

\begin{proof}
Let $T$ be the upper-triangular  Toeplitz matrix given by
\[
T\ = \bbm
1 & \frac{1}{\lambda} & \frac{1}{2! \lambda^2} & \dots & \frac{1}{k! \lambda^k} \\ \\

0 & 1 &  \frac{1}{\lambda} &  \dots & \frac{1}{(k-1)! \lambda^{k-1}} \\
\vdots &\vdots & \vdots &&\vdots  \\
0 & 0 & 0 & \dots & 1 
\ebm .
\]
Its inverse is
\[
T^{-1} \= \bbm
1 & \frac{-1}{\lambda} & \frac{1}{2! \lambda^2} & \dots & \frac{(-1)^k}{k! \lambda^k} \\ \\
0 & 1 &  \frac{1}{\lambda} &  \dots & \frac{(-1)^{k-1}}{(k-1)! \lambda^{k-1}} \\
\vdots &\vdots & \vdots &&\vdots  \\
0 & 0 & 0 & \dots & 1 
\ebm .
\]
We have, componentwise in $x$ and $h$, 
\[
T \ \bbm
x & 0 & 0 & \dots & 0 \\ 
0 & x+\lambda h &  0 &  \dots &  0 \\
\vdots &\vdots & \vdots &&\vdots  \\
0 & 0 & 0 & \dots & x+ k\lambda h
\ebm
\ T^{-1} 
\=
\bbm
x &  h  & 0 & \dots & 0 \\ 
0 & x + \lambda h &  h &  \dots & 0 \\
\vdots &\vdots & \vdots &&\vdots  \\
0 & 0 & 0 & \dots & x + k\lambda h 
\ebm
\]
Therefore
\[
\Delta^k F(x, x+\lambda h, \dots, x + k\lambda h)[h,h,\dots,h] 
\=
\frac{(-1)^k}{k!\lambda^k}
\sum_{j=0}^k (-1)^j
{ k \choose j} f(x + j\lambda h) .
\]
Take the limit as $\lambda \to 0$ and the right-hand side converges
to 
\[
\frac{1}{k!} D^k F (x) [h, h, \dots, h].
\]
By continuity, the left-hand side converges to $\Delta^k F (x,\dots, x)[h, \dots, h]$.
\end{proof}

Derivatives of NC functions are symmetric.
The case $k = 2$ was proved in \cite{aug21}.

\begin{proposition}
\label{propb3}
	Suppose $F$ is an NC function and $k\geq 1$ is an integer.
	If $\sigma$ is any permutation in ${\textfrak S}_k$ then
	\[
		D^k F(x)[h_1,\dots, h_k] = D^kF(x)[h_{\sigma(1)},\dots, h_{\sigma(2)}]
	\]
	for any $x$ in the domain of $F$ and for all $h_1,\dots, h_k\in \A^d$.
\end{proposition}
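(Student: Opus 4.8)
The plan is to reduce the symmetry of $D^k F(x)$ to the symmetry of the difference-differential operator $\Delta^k F$, and then prove the latter by a direct intertwining argument. The key link is Lemma~\ref{lemdel}, which gives $\Delta^k F(x,\dots,x)[h,\dots,h] = \frac{1}{k!}D^k F(x)[h,\dots,h]$; by polarization (the $k$-linear forms $D^k F(x)$ and $k!\,\Delta^k F(x,\dots,x)$ agree on all diagonal tuples $[h,\dots,h]$, and a symmetric $k$-linear form is determined by its diagonal), it suffices to prove that $\Delta^k F(x,\dots,x)[h_1,\dots,h_k]$ is symmetric in $h_1,\dots,h_k$. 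Since every permutation is a product of adjacent transpositions, it is enough to show that swapping $h_i$ and $h_{i+1}$ leaves $\Delta^k F(x,\dots,x)[h_1,\dots,h_k]$ unchanged for each $1\le i\le k-1$.

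First I would set up the bidiagonal matrix $Y$ of size $(k+1)\times(k+1)$ with all diagonal entries equal to $x$ and superdiagonal entries $h_1,\dots,h_k$, so that by Lemma~\ref{lemap181} the $(1,k+1)$ entry of $\wF(Y)$ is $\Delta^k F(x,\dots,x)[h_1,\dots,h_k]$. To swap $h_i$ and $h_{i+1}$, I would exhibit an invertible $S$ (in fact a permutation-like matrix, or more precisely a matrix that conjugates $Y$ to the bidiagonal matrix $Y'$ with $h_i,h_{i+1}$ interchanged) together with a suitable intertwiner; because all diagonal entries of $Y$ are the \emph{same} scalar point $x$, the relevant conjugating matrix commutes through the diagonal part and only permutes the two adjacent off-diagonal slots. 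Concretely, I would use the intertwining-preserving property of $\wF$ from Proposition~\ref{prb2}: find $L$ and matrices $A,B$ built from $Y$ and $Y'$ with $LY = Y'L$ (or work with a block $2\times 2$ decomposition around the $i,i+1$ coordinates as in part (ii) of the proof of Lemma~\ref{lemc5}, reducing to the case $k=2$), and read off that the $(1,k+1)$ entries of $\wF(Y)$ and $\wF(Y')$ coincide. The base case $k=2$ is exactly the content of \cite{aug21}, so one option is to invoke that and bootstrap; I would nonetheless prefer the self-contained block-triangular reduction, peeling off a $2\times2$ block containing the $h_i,h_{i+1}$ transposition and applying the $k=2$ symmetry of $\Delta^1\wF(A,D)[\cdot,\cdot]$ there.

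The main obstacle I anticipate is writing down the conjugating/intertwining matrices cleanly enough that the bookkeeping of which entry of $\wF(Y)$ equals which $\Delta^k$ is transparent — this is the same combinatorial subtlety that made the proof of Lemma~\ref{lemc5} lengthy, and here one must be careful that the conjugation genuinely sends $Y$ to $Y'$ rather than to some other bidiagonal matrix with a different diagonal. A secondary point to verify is that the polarization step is legitimate: $D^k F(x)$ is $k$-linear (stated after \eqref{eqc2}) but one needs it to be symmetric to recover it from its diagonal; to avoid circularity I would instead argue directly that $k!\,\Delta^k F(x,\dots,x)$ and $D^k F(x)$, as $k$-linear forms, have equal \emph{symmetrizations} — or simply note that since $\Delta^k F(x,\dots,x)$ is symmetric (once proven) and agrees with $\frac{1}{k!}D^kF(x)$ on diagonals, and the symmetrization of any $k$-linear form is determined by its diagonal values, the symmetrization of $D^kF(x)$ equals $k!\,\Delta^kF(x,\dots,x)$, which in turn equals $D^kF(x)$ on diagonals, forcing $D^kF(x)$ to equal its own symmetrization. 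Everything else is routine once the correct intertwiner is in hand.
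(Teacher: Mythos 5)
Your proposal has a fatal flaw at its very first reduction: the quantity $\Delta^k F(x,\dots,x)[h_1,\dots,h_k]$ that you propose to prove symmetric is \emph{not} symmetric in the directions. Take $d=2$, $F(x) = x^1 x^2$. Then $\wF$ applied to the $3\times 3$ bidiagonal matrix with diagonal $x$ and superdiagonal $h_1,h_2$ is just the product of the two corresponding bidiagonal matrices, whose $(1,3)$ entry is $h_1^1 h_2^2$. So $\Delta^2 F(x,x,x)[h_1,h_2] = h_1^1 h_2^2$, while $\Delta^2 F(x,x,x)[h_2,h_1] = h_2^1 h_1^2$; these differ, yet the symmetric $D^2F(x)[h_1,h_2] = h_1^1 h_2^2 + h_2^1 h_1^2$ agrees with $2\,\Delta^2 F(x,x,x)$ only on the diagonal, exactly as Lemma~\ref{lemdel} asserts. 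Any attempt to exhibit an intertwiner swapping adjacent superdiagonal entries of the bidiagonal matrix must therefore fail, and in fact the paper's own Lemma~\ref{lemc6} already shows that for scalar base points, $\Delta^k[H_1 e_{j_1},\dots,H_k e_{j_k}]$ is a constant times the ordered product $H_1\cdots H_k$ --- manifestly order-sensitive. The correct relation is $D^k F(x)[h_1,\dots,h_k] = \sum_{\sigma\in\mathfrak{S}_k}\Delta^k F(x,\dots,x)[h_{\sigma(1)},\dots,h_{\sigma(k)}]$, but the proposal does not prove or use that.

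Your polarization fallback is also incorrect as stated. From ``$S = k!\,\Delta^k F(x,\dots,x)$ is symmetric and agrees with $T = D^kF(x)$ on diagonals'' one may conclude $\mathrm{Sym}(T)=S$, but the final inference --- that $T$ agrees with $\mathrm{Sym}(T)$ on diagonals and hence $T=\mathrm{Sym}(T)$ --- is vacuous: every $k$-linear form agrees with its symmetrization on diagonals, and this never forces symmetry (e.g.\ $T(h_1,h_2) = 2h_1^1 h_2^2$ vs.\ $S(h_1,h_2)=h_1^1h_2^2+h_2^1h_1^2$). So even if $\Delta^k$ were symmetric, the reduction would not close.

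The paper's actual proof is an induction that never touches $\Delta^k$. It takes the $k=2$ case from \cite{aug21} as a black box (as you also proposed to do), and then observes that $G = D^{k-2}F$, viewed as an NC function of the block variable $(x,h_1,\dots,h_{k-2})$, satisfies $D^2 G = D^k F$ when the directions are placed in the right slots; applying the $k=2$ symmetry to $G$ swaps $h_{k-1}$ and $h_k$. Separately, the inductive hypothesis applied to $G' = D^{k-1}F$, followed by one more derivative, permutes $h_1,\dots,h_{k-1}$. These two generate $\mathfrak{S}_k$. If you want to salvage an approach through $\Delta^k$, you would need the full identity $D^k F = \sum_{\sigma}\Delta^k_\sigma$ (not merely the diagonal version in Lemma~\ref{lemdel}), at which point symmetry of $D^kF$ is immediate without proving any symmetry of $\Delta^k$ itself.
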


\begin{proof}
The case  $k=1$ is trivial, and $k=2$ was proved in \cite{aug21}.
Assume $k \geq 3$ and that 
 the result holds for $k-1$.
	If we can show that we can swap the last two entries
	\begin{equation}
		\label{eq:deriv switch k-1 and k}
		D^kF(x)[h_1,\dots, h_{k-1},h_k] = D^kF(x)[h_1,\dots, h_k,h_{k-1}]
	\end{equation}
	and also permute the first $k-1$ entries
	\begin{equation}
		\label{eq:deriv symmetric in k}
		D^kF(x)[h_1,\dots, h_{k-1},h_k] = D^kF(x)[h_{\sigma(1)},\dots, h_{\sigma(k-1)},h_k]
	\end{equation}
	then the result follows.
Set $G = D^{k-2}F$ and consider it as a function  of $x,h_1,\dots, h_{k-2}$.
Then $G$ is an NC function, and by the $k=2$ case,
	\begin{align*}
		D^2&G(x,h_1,\dots, h_{k-2})[(\ell_0,\dots,\ell_{k-2}),(\tilde{\ell}_0,\dots, \tilde{\ell}_{k-2})] \\
			&= D^2G(x,h_1,\dots, h_{k-2})[(\tilde{\ell}_0,\dots, \tilde{\ell}_{k-2}),(\ell_0,\dots,\ell_{k-2})].
	\end{align*}
	Since
	\[
		D^2G(x,h_1,\dots,h_{k_2})[h_{k-1},0,\dots,0,h_{k},0,\dots, 0] = D^kF(x)[h_1,\dots,h_k],
	\]
	we see that Equation~\eqref{eq:deriv switch k-1 and k} holds.
	The induction hypothesis says that 
	\begin{equation}
		\label{eq:deriv symmetric in k-1}
		D^{k-1}F(x)[h_1,\dots,h_{k-1}] = D^{k-1}F(x)[h_{\sigma(1)},\dots,h_{\sigma(k-1)}].
	\end{equation}
	If $G' = D^{k-1}F$ is treated as function in $x,h_1,\dots, h_{k-1}$, then applying Equation~\eqref{eq:deriv symmetric in k-1}, we have
	\begin{align*}
		D^kF(x)[h_1,\dots, h_{k-1},h_k] 
			&= DG'(x,h_1,\dots,h_{k-1})[h_k,0,\dots,0] \\
			&= DG'(x,h_{\sigma(1)},\dots, h_{\sigma(k-1)})[h_k,0,\dots,0]\\
			&= D^kF(x)[h_{\sigma(1)},\dots, h_{\sigma(k-1)}, h_k].
	\end{align*}
	Thus, both Equation~\eqref{eq:deriv switch k-1 and k} and Equation~\eqref{eq:deriv symmetric in k} hold.
	Therefore, the $k^{\text{th}}$ derivative of $F$ is symmetric in its arguments.
\end{proof}

Combining Lemma \ref{lemc6}, Lemma \ref{lemdel} and Proposition \ref{propb3}, we get
our first main result.

\begin{theorem}
\label{thmb1}
	Suppose $\O$ is an NC domain that contains a scalar point $a$ and $F$ is an NC function on $\O$.
	Then for each $k$, the $k^{\rm th}$ derivative $D^kF(a) [ h_1, \dots, h_k]$ is a  homogeneous
	polynomial of degree $k$, it is $k$-linear,  and it is  symmetric with respect to the action of ${\textfrak S}_k$.
\end{theorem}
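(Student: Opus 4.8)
The plan is to assemble the three preceding results --- Lemma \ref{lemdel}, Lemma \ref{lemc6}, and Proposition \ref{propb3}. That $D^kF(a)[h_1,\dots,h_k]$ is $k$-linear is immediate from the inductive definition \eqref{eqc2}, and its invariance under all permutations of $h_1,\dots,h_k$ is exactly Proposition \ref{propb3}; so the only substantive claim left is that it is a homogeneous free polynomial of degree $k$ in the letters that make up $h_1,\dots,h_k$.

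First I would pass to the diagonal. Lemma \ref{lemdel} gives
\[
D^kF(a)[h,\dots,h] \;=\; k!\,\Delta^k F(a,\dots,a)[h,\dots,h],
\]
and, since $a$ is a scalar point, $(a,\dots,a)$ is a $(k+1)$-tuple of $d$-tuples of scalars, so Lemma \ref{lemc6} (together with the $k$-linearity from Lemma \ref{lemc5}) tells us that
\[
\Delta^kF(a,\dots,a)[h_1,\dots,h_k] \;=\; \sum_{i_1,\dots,i_k} c_{i_1,\dots,i_k}\, h_1^{i_1}\cdots h_k^{i_k}
\]
for scalars $c_{i_1,\dots,i_k}$, the $j$-th letter of each word being a coordinate of $h_j$. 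Note this expression need not be symmetric in $h_1,\dots,h_k$ --- for instance $\Delta^2F(a,a,a)[h_1,h_2]=h_1h_2$ when $F(x)=x^2$ --- so we cannot simply identify $D^kF(a)$ with $k!\,\Delta^kF(a,\dots,a)$.

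Next I would symmetrize and polarize. Put
\[
P(h_1,\dots,h_k) \;:=\; \sum_{\sigma\in\mathfrak{S}_k}\ \sum_{i_1,\dots,i_k} c_{i_1,\dots,i_k}\, h_{\sigma(1)}^{i_1}\cdots h_{\sigma(k)}^{i_k}.
\]
This $P$ is a linear combination of words of length $k$, hence a homogeneous free polynomial of degree $k$; it is $k$-linear; and by construction it is symmetric under permutations of $h_1,\dots,h_k$. Evaluating on the diagonal and using Lemma \ref{lemdel} again, $P(h,\dots,h)=k!\,\Delta^kF(a,\dots,a)[h,\dots,h]=D^kF(a)[h,\dots,h]$. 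Now $D^kF(a)[\cdot]$ is a bounded symmetric $k$-linear map on $\A^d$ (by Fréchet holomorphy, Proposition \ref{prb1}, and Proposition \ref{propb3}), $P$ is a bounded symmetric $k$-linear map, and over $\C$ such a map is determined by its values on the diagonal (the polarization identity). Hence $D^kF(a)[h_1,\dots,h_k]=P(h_1,\dots,h_k)$ for all $h_1,\dots,h_k\in\A^d$, so $D^kF(a)$ is a homogeneous free polynomial of degree $k$; combined with the $k$-linearity and symmetry already noted, this is the theorem.

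I do not expect a serious obstacle beyond the cited lemmas. The one step requiring care is the last: upgrading ``$D^kF(a)$ is given by a free polynomial on the diagonal'' to ``$D^kF(a)$ is given by a free polynomial as a multilinear form.'' This is precisely where the symmetry furnished by Proposition \ref{propb3} is genuinely needed, and where one must invoke uniqueness of polarization, which is valid because we work in characteristic zero.
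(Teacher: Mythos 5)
Your proposal is correct, and at bottom it is the same proof as the paper's: the combination of Lemma \ref{lemdel} (to identify the diagonal), Lemma \ref{lemc6} (to get the free-polynomial form of $\Delta^k$ at the scalar tuple), and Proposition \ref{propb3} (symmetry) is exactly the assembly the authors use. The only cosmetic difference is in how the polarization step is carried out: you symmetrize $\Delta^k$ into a free polynomial $P$, observe $P$ and $D^kF(a)$ agree on the diagonal, and then cite the abstract polarization identity for symmetric $k$-linear forms; the paper instead carries out polarization concretely by substituting $h=\sum_i s_i H_i e_{j_i}$ into the diagonal, expanding by $k$-linearity, and reading off the coefficient of $s_1\cdots s_k$ on both sides. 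These are the same computation, one packaged as a citation and the other spelled out. Your remark that boundedness is needed to invoke polarization can be dropped --- the identity is purely algebraic and works for any symmetric $k$-linear form with values in a vector space over a field of characteristic zero --- but this is a stylistic point, not a gap.
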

\begin{proof}
We know that  $D^kF (a) [h_1, \dots, h_k]$ is $k$-linear, so we can assume that
each $h_i$ is a $d$-tuple with only one entry; we can write $h_i = H_i e_{j_i}$, as in the proof of Lemma \ref{lemc6}.
We want to show that
\be
\label{eqc21}
D^k F (a) [ H_1 e_{j_1}, \dots, H_k e_{j_k} ]
\ee
is a homogeneous polynomial of degree $k$ in the operators $H_1, \dots, H_k$.
Let $s_i$ be scalars for $1 \leq i \leq k$, and consider
\be
\label{eqc20}
D^k F (a) [ s_1 H_1 e_{j_1} + \dots + s_k H_k e_{j_k}, s_1 H_1 e_{j_1} + \dots + s_k H_k e_{j_k}, \dots ].
\ee
Since all the arguments are the same, by Lemma \ref{lemdel} this agrees with $k!$ times $\Delta^k$, which
by Lemma \ref{lemc6} is a homogeneous polynomial of degree $k$.
Group the terms in \eqref{eqc20} by what the commutative monomial in $s_1, \dots, s_k$ is, and
consider  the sum of the terms  in \eqref{eqc20} that are a multiple of $s_1 \dots s_k$.
These correspond to 
\be
\label{eqc22}
\sum_{\sigma \in {\textfrak S}_k} 
D^k F (a) [ H_{\sigma(1)} e_{j_\sigma(1)}, \dots, H_\sigma(k) e_{j_\sigma(k)} ] .
\ee
By Proposition \ref{propb3}, \eqref{eqc22} is just $k!$ times \eqref{eqc21}, and hence this is a 
homogeneous polynomial in $H_1, \dots, H_k$, as desired.
\end{proof}

\section{Approximating NC functions by free polynomials}
\label{secd}

The results in this section are in improvement over those  in \cite{amip15}, as they do not need the {\em a priori} assumption that the function is sequentially strong operator continuous.
Recall that a set $\O$ in a vector space is balanced if $\alpha \O \subseteq \O$ whenever $\alpha$ is a complex number
of modulus less than or equal to $1$. Importantly, $\mathcal{P}(A)$ and $\mathcal{R}(A)$ are balanced.

If $\O$ contains a scalar point $\alpha$, and $F$ is NC on $\Omega$, then $F$ is given by a convergent series of free Taylor polynomials near $\alpha$. For convenience, we assume $\alpha = 0$.

\begin{lemma}
\label{thmd1}
Let $\O$ be an NC domain containing $0$, and let $F$ be an NC function on $\Omega$. Then there is an open set $\Upsilon \subset \O$
containing $0$, and homogeneous free polynomials $p_k$  of degree $k$ so that
\be
\label{eqd1}
F(x) \= \sum_{k=0}^\infty p_k(x) \quad \forall \ x \in \Upsilon ,
\ee
and the convergence is uniform in $\Upsilon$.
\end{lemma}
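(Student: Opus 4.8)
The plan is to combine the Fréchet holomorphy of $F$ (Proposition \ref{prb1}(i)) with Theorem \ref{thmb1}, which identifies each derivative at the scalar point $0$ as a homogeneous free polynomial. Concretely, since $F$ is Fréchet holomorphic and $0 \in \O$, there is an open neighborhood $G$ of $0$ in $\A^d$ on which the Taylor series
\[
F(h) \= F(0) + \sum_{k=1}^\infty D^k F(0)[h,\dots,h]
\]
converges uniformly for $h \in G$. By Theorem \ref{thmb1}, for each $k$ the map $h \mapsto D^k F(0)[h,\dots,h]$ is a homogeneous free polynomial of degree $k$; divide by $k!$ (or simply rename) to define $p_k$, with $p_0 = F(0)$ the scalar constant from Lemma \ref{lemc2}. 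Then set $\Upsilon$ to be any open subset of $\O$ containing $0$ that is contained in $G$ (shrinking if necessary so that $\Upsilon \subseteq \O \cap G$). On $\Upsilon$ we get \eqref{eqd1} with uniform convergence, which is exactly the claim.

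First I would invoke Proposition \ref{prb1}(i) to get the neighborhood $G$ and the uniformly convergent Taylor expansion at $0$. Next I would apply Theorem \ref{thmb1} termwise: each $D^k F(0)[\,\cdot\,,\dots,\cdot\,]$ is $k$-linear, symmetric, and a homogeneous degree-$k$ free polynomial in its (repeated) argument, so $p_k(x) := \tfrac{1}{k!}D^k F(0)[x,\dots,x]$ is itself a homogeneous free polynomial of degree $k$. Then I would check that $\Upsilon := \O \cap G$ (or a suitable ball inside it) is open, contains $0$, and lies in $\O$, and that on $\Upsilon$ the series $\sum_{k\geq 0} p_k(x)$ both equals $F(x)$ and converges uniformly — all of which is immediate from the choice of $G$.

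There is essentially no obstacle here: the lemma is a bookkeeping consequence of the two quoted results, the only minor point being to record that $F(0)$ is a genuine scalar (hence the degree-zero term is a legitimate constant free polynomial), which follows from Lemma \ref{lemc2}, and to note that uniform convergence on $G$ restricts to uniform convergence on the smaller set $\Upsilon$. The ``hard part,'' such as it is, was already done in establishing Theorem \ref{thmb1}; this statement simply repackages the Taylor series of a Fréchet-holomorphic NC function at a scalar point as a free power series.
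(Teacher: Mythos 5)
Your proposal matches the paper's proof: both invoke Proposition \ref{prb1}(i) for Fr\'echet holomorphy at $0$ and Theorem \ref{thmb1} to identify each derivative as a homogeneous free polynomial, then simply name $\Upsilon$ as the neighborhood supplied by Fr\'echet holomorphy. Your added remarks about $F(0)$ being scalar (Lemma \ref{lemc2}) and about the $1/k!$ normalization are harmless bookkeeping that the paper leaves implicit.
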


\begin{proof}
By Proposition \ref{prb1}, we know that $F$ is Fr\'echet holomorphic at $0$, and by Theorem \ref{thmb1},
we know that the $k^{\rm th}$ derivative is a homogeneous polynomial $p_k$ of degree $k$.
Therefore \eqref{eqd1} holds.
\end{proof}

\begin{theorem}
\label{thmd2}
Let $\O$ be a balanced NC domain, and $F: \O \to \bh$. The following statements are equivalent.
\begin{enumerate}
	\item[(i)] The function $F$ is NC.
	
	\item[(ii)] There is a power series expansion $\sum_{k=0}^\i p_k(x)$ that converges absolutely and locally uniformly at each point $x \in 
	\O$ to
	$F(x)$, and such that each $p_k$ is a homogeneous free polynomial of degree $k$.
	
	\item[(iii)] For any triple of points in $ \O$, there is a sequence of free polynomials  that converge
	 uniformly to $F$ on a neighborhood of each point in the triple.
\end{enumerate}
\end{theorem}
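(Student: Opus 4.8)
The plan is to prove the cycle of implications $(i)\Rightarrow(ii)\Rightarrow(iii)\Rightarrow(i)$; balancedness of $\O$ will be used only in the first step. For $(i)\Rightarrow(ii)$, note first that a nonempty balanced set contains $0$, which is a scalar point, so Lemma~\ref{thmd1} already gives homogeneous free polynomials $p_k$ of degree $k$ with $F=\sum_k p_k$ converging uniformly on some neighborhood of $0$. To spread this over all of $\O$ I would fix $x\in\O$ and examine the slice $u(\zeta):=F(\zeta x)$. The set $\{t\ge 0: tx\in\O\}$ is relatively open (continuity of $t\mapsto tx$) and downward closed (if $tx\in\O$ and $|\alpha|\le 1$ then $\alpha tx\in\O$), hence is an interval containing $[0,1]$, so by openness it contains $[0,\rho]$ for some $\rho=\rho(x)>1$; using balancedness once more, $\zeta x\in\O$ for all $|\zeta|<\rho$. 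Then $u$ is a $\bh$-valued holomorphic function on a disc of radius $\rho>1$ (holomorphic because $F$ is Fr\'echet holomorphic by Proposition~\ref{prb1}), and near $0$ it coincides with $\sum_k\zeta^k p_k(x)$ by homogeneity of the $p_k$; by uniqueness of Taylor coefficients that power series is the Taylor series of $u$ and hence converges on $|\zeta|<\rho$. Setting $\zeta=1$ gives $F(x)=\sum_k p_k(x)$ with absolute convergence.

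For the local uniformity clause of $(ii)$, fix $x_0$, choose $\rho$ with $1<\rho<\rho(x_0)$, and observe that $\{\zeta x_0:|\zeta|\le\rho\}$ is a compact subset of $\O$, so there is $r>0$ with $\zeta x\in\O$ whenever $|\zeta|\le\rho$ and $\|x-x_0\|\le r$; on the resulting compact set $F$ is bounded by some $M$, and the Cauchy estimate over the circle $|\zeta|=\rho$ yields $\|p_k(x)\|\le M\rho^{-k}$ uniformly for $\|x-x_0\|\le r$, so $\sum_k p_k$ converges absolutely and uniformly on $B(x_0,r)$. The implication $(ii)\Rightarrow(iii)$ is then immediate: the partial sums $S_N=\sum_{k=0}^N p_k$ are free polynomials, and given any triple of points, local uniform convergence supplies a neighborhood of each on which $S_N\to F$ uniformly, hence $S_N\to F$ uniformly on the union of these three neighborhoods.

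For $(iii)\Rightarrow(i)$ I would check local boundedness and the intertwining property separately. Near any $x_0$, applying $(iii)$ to the triple $(x_0,x_0,x_0)$ yields free polynomials $q_n\to F$ uniformly on a bounded neighborhood $V$ of $x_0$; each $q_n$ is bounded on $V$, hence so is $F$. For the intertwining property I would invoke Lemma~\ref{lemd3}: let $x,y\in\O$ and let $S:\h\to\h^{(2)}$ be invertible with $z:=S^{-1}[x\oplus y]S\in\O$, and apply $(iii)$ to the triple $(x,y,z)$ to get free polynomials $q_n$ converging to $F$ uniformly near each of $x$, $y$, $z$. Since $q_n$ is a free polynomial and both conjugation by $S$ and formation of direct sums are algebra homomorphisms, $q_n(z)=q_n\big(S^{-1}[x\oplus y]S\big)=S^{-1}\big(q_n(x)\oplus q_n(y)\big)S$; letting $n\to\infty$ and using continuity of $a\mapsto S^{-1}aS$ gives $F(z)=S^{-1}\big(F(x)\oplus F(y)\big)S$, so Lemma~\ref{lemd3} yields that $F$ is intertwining preserving. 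Together with local boundedness, $F$ is NC.

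The main obstacle is the radial-extension step of $(i)\Rightarrow(ii)$, and specifically obtaining a radius \emph{strictly larger than} $1$ uniformly over a neighborhood of $x_0$: a Cauchy estimate on the unit circle alone gives only $\|p_k(x)\|\le M$, which does not sum, so one genuinely needs balancedness together with openness to push the disc of convergence of the slice $F(\zeta x)$ past $1$ on a whole neighborhood; everything after that is passage to limits and routine bookkeeping with homogeneous polynomials. It is also worth noting that the appearance of a \emph{triple} in $(iii)$ is precisely what $(iii)\Rightarrow(i)$ requires: intertwining preservation is detected, via Lemma~\ref{lemd3}, by the two points $x,y$ together with their amalgam $S^{-1}[x\oplus y]S$, so two points would not suffice while three do.
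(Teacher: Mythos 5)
Your proposal is correct and follows essentially the same approach as the paper: use balancedness to push the slice $\zeta\mapsto F(\zeta x)$ to a disc of radius strictly greater than $1$ and apply Cauchy estimates for $(i)\Rightarrow(ii)$, take partial sums for $(ii)\Rightarrow(iii)$, and apply Lemma~\ref{lemd3} to the triple $(x,y,S^{-1}[x\oplus y]S)$ for $(iii)\Rightarrow(i)$. The only minor imprecision is the phrase ``on the resulting compact set'' in the uniformity argument (the tube $\{\zeta x:|\zeta|\le\rho,\|x-x_0\|\le r\}$ is not compact in infinite dimensions), but the bound you want does follow from local boundedness of $F$ plus compactness of $\{\zeta x_0:|\zeta|\le\rho\}$, which is the intended reading.
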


\begin{proof}
$(i) \Rightarrow (ii):$ By Lemma \ref{thmd1}, $F$ is given by a power series expansion \eqref{eqd1} in a neighborhood
of $0$. We must show that this series converges absolutely on all of $\O$.

Let $x \in \O$. Since $\O$ is open and balanced, there exists $r > 1$ so that $\D(0,r) x \subseteq \O$.
Define a function $f: \D(0,r) \to \bh$ by
\[
f(\zeta) \= F ( \zeta x) .
\]
Then $f$ is holomorphic, and so norm continuous \cite{rud91}*{Thm 3.31}.
Therefore
\[ \sup \left\{ \| f (\zeta) \| \, : \, | \zeta | = \frac{1+r}{2} \right\}  \ =: \  M \ \ < \infty.
\]
By the Cauchy integral formula,
\[
\| p_k(x) \| \= \frac{1}{k!} \left\| \frac{d^k}{d \zeta^k} f (\zeta) \big\lvert_0 \right\| \ \leq \ M \left(\frac{2}{1+r}\right)^k .
\]
Therefore the power series
$\sum p_k(x)$ converges absolutely, to $f(1) = F(x)$.

Since $F$ is NC, it is bounded on some neighborhood of $x$, and by the Cauchy estimate again, the convergence
of the power series is uniform on that neighborhood.

\vs
$(ii) \Rightarrow (iii):$
Let $x_1, x_2, x_3 \in \O$.
Let $q_k = \sum_{j=0}^k p_k$. Then $q_k(x)$ converges uniformly to $F(x)$ on an open set containing  $\{ x_1, x_2, x_3 \}$.

\vs
$(iii) \Rightarrow (i):$
Since $F$ is locally uniformly approximable by free polynomials, it is locally bounded.
To see that it is also intertwining preserving, 
we shall show that it satisfies the hypotheses of Lemma \ref{lemd3}.
Let $S: \h \to \h^{(2)}$ be invertible, and assume that $x,y$ and $z = S^{-1}[x \oplus y] S$ are all in $\O$.
Let $q_k$ be a sequence of free polynomials that approximate $F$ on $\{ x,y,z \}$.
Then
\beq
F \left(  S^{-1}
\bbm x & 0 \\
0 & y \ebm
S \right)
&\=&
\lim_k q_k \left(  S^{-1}
\bbm x & 0 \\
0 & y \ebm
S \right) \\
&=&
\lim_k \left(  S^{-1}
\bbm p_k(x) & 0 \\
0 & p_k(y) \ebm
S \right) \\
&=&
 \left(  S^{-1}
\bbm F(x) & 0 \\
0 & F( y) \ebm
S \right)  .
\eeq
So by Lemma \ref{lemd3}, $F$ is intertwining preserving.
\end{proof}

The requirement that $F$ be intertwining preserving forces $F(x)$ to always lie in the double commutant of $x$.
But if $F$ is also locally bounded on a balanced domain containing $x$, we get a much stronger conclusion as a corollary
of Theorem \ref{thmd2}.

\begin{corollary}
\label{cord1}
Suppose $F$ is an NC function on a balanced NC domain $\O$. Then $F(x)$ is in the norm closed unital algebra generated
by $\{ x^1, \dots, x^d \}$.
\end{corollary}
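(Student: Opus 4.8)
The plan is to deduce Corollary~\ref{cord1} directly from Theorem~\ref{thmd2}, using the power series representation it provides. First I would fix $x \in \O$ and invoke the implication $(i) \Rightarrow (ii)$ of Theorem~\ref{thmd2}: since $\O$ is a balanced NC domain and $F$ is NC, there are homogeneous free polynomials $p_k$ of degree $k$ with $F(x) = \sum_{k=0}^\infty p_k(x)$, the series converging absolutely in $\bh$. The key observation is that each homogeneous free polynomial $p_k$ is, by definition, a finite linear combination of words $x^{i_1} x^{i_2} \cdots x^{i_k}$ in the coordinates $x^1, \dots, x^d$, hence $p_k(x)$ lies in the (not necessarily closed) unital algebra $\mathcal{A}_x$ generated by $\{x^1, \dots, x^d\}$.

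The second step is to pass to the norm closure. Let $\overline{\mathcal{A}_x}$ denote the norm-closed unital subalgebra of $\bh$ generated by $x^1, \dots, x^d$; this is a closed subspace of $\bh$. Each partial sum $\sum_{k=0}^N p_k(x)$ lies in $\mathcal{A}_x \subseteq \overline{\mathcal{A}_x}$, and since the series converges in norm to $F(x)$ and $\overline{\mathcal{A}_x}$ is norm-closed, we conclude $F(x) \in \overline{\mathcal{A}_x}$. As $x \in \O$ was arbitrary, this is exactly the assertion of the corollary.

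I do not anticipate a genuine obstacle here: the content is entirely front-loaded into Theorem~\ref{thmd2}, and the corollary is a short closure argument. The only point requiring a modicum of care is making sure the polynomials $p_k$ furnished by Theorem~\ref{thmd2}(ii) are the \emph{same} for every $x$ — they are, since they arise as the homogeneous Taylor coefficients $p_k = \frac{1}{k!}D^kF(0)[\,\cdot\,]$ of $F$ at the scalar point $0$ via Lemma~\ref{thmd1}, which does not depend on the evaluation point — but in fact the corollary only needs, for each individual $x$, \emph{some} norm-convergent expansion of $F(x)$ into free-polynomial terms, so even this subtlety is immaterial. One should also remark that the conclusion is strictly stronger than the bare intertwining-preserving property, which only forces $F(x)$ into the double commutant $\{x^1,\dots,x^d\}''$; the improvement is precisely what local boundedness on a balanced domain buys, via the holomorphy and Cauchy-estimate machinery already deployed in the proof of Theorem~\ref{thmd2}.
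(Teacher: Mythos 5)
Your argument is correct and is exactly the intended derivation: the paper states Corollary~\ref{cord1} immediately after Theorem~\ref{thmd2} with no separate proof, precisely because it follows by the closure argument you give — each $p_k(x)$ lies in the unital algebra generated by $x^1,\dots,x^d$, and norm convergence of $\sum p_k(x)$ to $F(x)$ places $F(x)$ in the norm closure. Your aside about the $p_k$ being independent of $x$ (and about this being immaterial) is accurate, as is the contrast with the double-commutant conclusion, which echoes the paper's own remark preceding the corollary.
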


\section{$k$-linear NC functions}

In the following theorem, we assume that $\Lambda$ is NC as a function of all $dk$ variables at once, and
is $k$-linear if they are broken up into $d$-tuples. If we had an independent proof of Theorem \ref{thmb2}, we could use it to prove Theorem \ref{thmb1} with the aid of Lemma \ref{lemc4}. Instead, we deduce it as a consequence of Theorem \ref{thmb1}.
\begin{theorem}
\label{thmb2}
Let $\O$ be an NC domain.
Let $\Lambda : [\O]^k \to \bh$ be NC and $k$-linear.
 Then  $\Lambda$ is a homogeneous free polynomial of degree $k$.
\end{theorem}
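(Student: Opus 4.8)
The plan is to reduce Theorem~\ref{thmb2} to Theorem~\ref{thmb1}. The idea is that a $k$-linear NC function $\Lambda:[\O]^k\to\bh$ is, in particular, an NC function of the $dk$ variables obtained by concatenating the $k$ tuples, so we may try to apply the derivative structure theorem. The obstacle is that $\O$ need not contain a scalar point, and even if it does, $[\O]^k$ sits inside $(\A^d)^k=\A^{dk}$ and we need $[\O]^k$ (or a suitable enlargement) to be an NC domain in $\A^{dk}$ containing a scalar point. So the first step is to observe that because $\Lambda$ is $k$-linear, its natural domain of definition really is all of $(\A^d)^k$: for fixed nonzero scalars $\lambda_1,\dots,\lambda_k$ and arbitrary $h_i\in\A^d$, if some dilate $\lambda_i h_i$ lies in $\O$ then linearity lets us recover $\Lambda(h_1,\dots,h_k)=(\lambda_1\cdots\lambda_k)^{-1}\Lambda(\lambda_1 h_1,\dots,\lambda_k h_k)$; since $\O$ is open and nonempty this extension is unambiguous and still intertwining preserving and $k$-linear, hence NC on a balanced NC domain in $\A^{dk}$ containing $0$ (indeed on all of $\A^{dk}$, but a balanced neighborhood of $0$ suffices).

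Next I would apply Theorem~\ref{thmb1} (equivalently Lemma~\ref{thmd1}) at the scalar point $0\in\A^{dk}$: the extended $\Lambda$ has a convergent free Taylor expansion $\Lambda=\sum_{m\ge 0} p_m$ near $0$, with each $p_m$ a homogeneous free polynomial of degree $m$ in the $dk$ operator variables $(h_1^1,\dots,h_1^d,\dots,h_k^1,\dots,h_k^d)$. Now I exploit the grading in the $k$ groups of variables separately. For each $i$, scaling $h_i\mapsto t_i h_i$ with $t_i$ a scalar and using $k$-linearity, $\Lambda(t_1 h_1,\dots,t_k h_k)=t_1\cdots t_k\,\Lambda(h_1,\dots,h_k)$. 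Matching this against $\sum_m p_m(t_1 h_1,\dots,t_k h_k)$ and comparing coefficients of the monomials in $t_1,\dots,t_k$, every homogeneous component $p_m$ must vanish unless it is multihomogeneous of degree exactly $1$ in each of the $k$ blocks of variables; in particular $p_m=0$ for $m\neq k$, and $\Lambda=p_k$ on a neighborhood of $0$, with $p_k$ a free polynomial of degree $k$, linear in each block.

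Finally I would propagate this identity from the neighborhood of $0$ to all of $[\O]^k$ (and $[\A^{dk}]$) by $k$-linearity: given arbitrary $(h_1,\dots,h_k)$, pick scalars $t_i$ small enough that $(t_1 h_1,\dots,t_k h_k)$ lies in the neighborhood where $\Lambda=p_k$, then $\Lambda(h_1,\dots,h_k)=(t_1\cdots t_k)^{-1}\Lambda(t_1 h_1,\dots,t_k h_k)=(t_1\cdots t_k)^{-1}p_k(t_1 h_1,\dots,t_k h_k)=p_k(h_1,\dots,h_k)$, using that $p_k$ is itself multilinear in the blocks. This shows $\Lambda$ equals a single homogeneous free polynomial of degree $k$ everywhere on $[\O]^k$. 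I expect the main obstacle to be the bookkeeping in the first step: verifying that the linear extension of $\Lambda$ off $\O$ is genuinely well-defined and still intertwining preserving (one must check the extension does not depend on the choice of dilation scalars and that it respects intertwinings, which follows from intertwining preservation of $\Lambda$ on $\O$ together with linearity), so that Theorem~\ref{thmb1} is legitimately applicable at a scalar point.
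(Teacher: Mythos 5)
Your proposal reaches the right conclusion by a genuinely different route than the paper's, so it is worth comparing. The paper does not pass through the power series expansion at $0$ at all: instead it computes directly with the Leibniz rule. Using $k$-linearity, one shows by induction on $j$ that $D^j\Lambda(x)[\mathfrak{h},\dots,\mathfrak{h}]$ is $j!$ times the sum of $\Lambda$ over all $k$-tuples obtained by placing $j$ entries from $\mathfrak{h}=(h_1,\dots,h_k)$ and $k-j$ entries from $x$ in their respective slots; at $j=k$ this collapses to the single identity
\[
D^k\Lambda(x)[\mathfrak{h},\dots,\mathfrak{h}] \;=\; k!\,\Lambda(h_1,\dots,h_k),
\]
valid for every $x\in[\O]^k$. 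Theorem~\ref{thmb1} then identifies the left-hand side as a homogeneous free polynomial of degree $k$, and one is done in a few lines. Your argument goes through Lemma~\ref{thmd1} and a multihomogeneity bookkeeping of the Taylor coefficients; it is heavier, but it works and it highlights the same underlying mechanism, namely that $k$-linearity forces all but one homogeneous layer of the Taylor expansion to vanish.

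Two points to sharpen in the extension step, which you correctly flag as the main obstacle. First, the scaling recipe $\Lambda(h_1,\dots,h_k)=(\lambda_1\cdots\lambda_k)^{-1}\Lambda(\lambda_1 h_1,\dots,\lambda_k h_k)$ only reaches those $h_i$ whose complex line through the origin actually meets $\O$; a general NC domain need not be star-shaped about $0$, so this formula does not define the extension on all of $\A^{dk}$. What does work is the observation that $\O$, being open and nonempty, linearly spans $\A^d$, so the $k$-linear $\Lambda$ extends uniquely by multilinearity; but that must replace the scaling argument. Second, the claim that the extension is still intertwining preserving is asserted rather than proved, and it is not a formality. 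The paper sidesteps this: the derivatives $D^k\Lambda(x)[\cdot,\dots,\cdot]$ are automatically NC in the directions (Lemma~\ref{lemc4}), and the displayed identity shows the expression is independent of $x$, so Theorem~\ref{thmb1} can be invoked once one knows such an $x$ exists. Both your argument and the paper's implicitly need $[\O]^k$ — or an enlargement obtained by extension — to contain a scalar point; making the NC-ness of the multilinear extension airtight is exactly the content of that step, and if you include it, your route is a legitimate alternative proof.
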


\begin{proof}
Let ${\mathfrak h} = (h_1, \dots, h_k)$ be a $k$-tuple of $d$-tuples in $\O$.
Calculating, and using $k$-linearity,  we get
\beq
D \Lambda (x) [{\mathfrak h}] &\= & \lim_{\lambda \to 0} \frac{1}{\lambda}
[ \Lambda (x + \lambda h) - \Lambda (x) ]
\\
&=& \Lambda(h_1, x_2, \dots, x_k) + \Lambda(x_1, h_2, \dots, x_k ) + \dots .
\eeq
Repeating this calculation, we get that $D^2 \Lambda (x) [ {\mathfrak h}, {\mathfrak h}]$ is $2!$ times the sum
of $\Lambda$ evaluated at every $k$-tuple that has $k-2$ entries from $(x_1, \dots, x_k)$ and $2$ entries from ${\mathfrak h}$.
Continuing, we get
\be
\label{eqf22}
D^k \Lambda(x) [ {\mathfrak h}, \dots, {\mathfrak h}] 
\=
k! \ \Lambda (h_1, \dots, h_k) .
\ee
By Theorem \ref{thmb1}, the left-hand side of \eqref{eqf22} is a homogeneous free polynomial of degree $k$,
so the right-hand side is too.
\end{proof}

It is worth singling out a special case of Theorem \ref{thmb2}.

\begin{corollary}
\label{corc1}
Let $\Lambda : [\bh]^{dk} \to \bh$ be $k$-linear, intertwining preserving, and bounded.
Then  $\Lambda$ is a homogeneous nc polynomial of degree $k$.
\end{corollary}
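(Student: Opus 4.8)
The entire content of this corollary is Theorem \ref{thmb2}; the only subtlety is that $[\bh]^{dk}$ is unbounded and hence is not itself an NC domain, so one must pass through a bounded balanced domain and then extend the resulting polynomial identity by homogeneity. The plan is as follows.

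First I would take $\A = \bh$, which satisfies the direct sum property \eqref{eqa1} (it is the first of the examples listed after \eqref{eqa1}), and set $\O = \mathcal{P}(\bh)$, the non-commutative polydisk in $d$ variables from \eqref{eqa2}. By Example \ref{exama1} this $\O$ is an NC domain; it is balanced and contains the scalar point $0$. Consequently the $k$-fold product $[\O]^k$, viewed as a subset of $\bh^{dk}$, is again an NC domain: it is open and bounded, the unitaries $U_n$ witnessing closure of $\O$ under finite direct sums witness closure of $[\O]^k$ when applied coordinatewise, and $(0,\dots,0)$ is a scalar point in it. Next I would observe that the restriction of $\Lambda$ to $[\O]^k$ is NC and $k$-linear: it is $k$-linear by hypothesis; it is intertwining preserving since the restriction of an intertwining preserving map is intertwining preserving; and it is locally bounded because boundedness of the $k$-linear map $\Lambda$ means $\|\Lambda(h_1,\dots,h_k)\|\le C\|h_1\|\cdots\|h_k\|$, so $\Lambda$ is bounded on the bounded set $[\O]^k$. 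Theorem \ref{thmb2} then produces a homogeneous free (equivalently, nc) polynomial $p$ of degree $k$ with $\Lambda = p$ on $[\O]^k$.

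Finally I would promote the identity $\Lambda = p$ from $[\O]^k$ to all of $[\bh]^{dk}$. Given arbitrary $h_1,\dots,h_k\in\bh^d$, choose a scalar $\lambda>0$ small enough that $\lambda h_i\in\O$ for every $i$. Then, using $k$-linearity of $\Lambda$ on one side and homogeneity of degree $k$ of $p$ on the other,
\[
\lambda^k\,\Lambda(h_1,\dots,h_k)=\Lambda(\lambda h_1,\dots,\lambda h_k)=p(\lambda h_1,\dots,\lambda h_k)=\lambda^k\,p(h_1,\dots,h_k),
\]
and dividing by $\lambda^k$ gives $\Lambda(h_1,\dots,h_k)=p(h_1,\dots,h_k)$ for all $h_i$. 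I do not expect any genuine obstacle: the work is entirely in Theorem \ref{thmb2}, and the only point requiring care is precisely the unboundedness of $[\bh]^{dk}$, which is why the detour through $\mathcal{P}(\bh)$ (or any small ball about $0$) is needed.
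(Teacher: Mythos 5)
Your argument is correct and is the natural way to fill in what the paper leaves implicit (the paper gives no proof, merely remarking that the corollary is a ``special case'' of Theorem \ref{thmb2}). Restricting to the bounded balanced NC domain $\mathcal{P}(\bh)^k$ containing $0$, applying Theorem \ref{thmb2} there, and then extending the resulting polynomial identity to all of $[\bh]^{dk}$ using $k$-linearity of $\Lambda$ together with degree-$k$ homogeneity of $p$ is precisely the right move; the only point requiring care, which you correctly identify, is the unboundedness of $[\bh]^{dk}$.
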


\begin{lemma}
\label{lemc4}
The $k^{\rm th}$ derivative $D^kF(x)[h_1,\dots,  h_k]$ is NC on $\O \times \A^{dk}$.
If $a \in \O$ is a scalar point, then $D^kF(a)[h_1, \dots , h_k]$ is NC on $\A^{dk}$.
\end{lemma}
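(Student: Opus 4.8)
The plan is to prove the two claims of Lemma~\ref{lemc4} in turn, using the difference-differential operators $\Delta^k$ as the bridge, since those are manifestly defined by applying $\wF$ to a single (upper triangular) operator matrix and extracting one entry, which is exactly the kind of construction that respects intertwinings.

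First I would show that $D^k F(x)[h_1,\dots,h_k]$, viewed as a function $G$ on $\O\times\A^{dk}$ with the $n$-th level identified via the fixed unitaries $U_n$ of \eqref{eqa1} (so that $G$ is graded), is intertwining preserving. By Lemma~\ref{lemdel} and the polarization identity implicit in the proof of Theorem~\ref{thmb1}, $D^k F(x)[h_1,\dots,h_k]$ is a fixed linear combination of expressions of the form $\Delta^k F(x,\dots,x)[v,\dots,v]$ with $v$ ranging over sums $s_1 h_1 + \dots + s_k h_k$; equivalently, by $k$-linearity and symmetry it is a linear combination of $\Delta^k F(x,\dots,x)[h_{i_1},\dots,h_{i_k}]$ summed over a suitable index set. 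So it suffices to check that $(x,h_1,\dots,h_k)\mapsto \Delta^k F(x,\dots,x)[h_1,\dots,h_k]$ is intertwining preserving, and for that I would check the hypothesis of Lemma~\ref{lemd3}: given $S:\h\to\h^{(2)}$ invertible with $x, \tilde x, S^{-1}[x\oplus\tilde x]S$ in $\O$ and $h_i,\tilde h_i$ with the block-sum $S^{-1}[h_i\oplus\tilde h_i]S$ the relevant direction, conjugating the big bidiagonal matrix of \eqref{eqc3} (with all diagonal blocks of the form $S^{-1}[x\oplus\tilde x]S$) by the block-permutation unitary that regroups the $\h\oplus\h$ factors shows, via Proposition~\ref{prb2} applied to $\wF$, that $\Delta^k F$ of the block-sum equals $S^{-1}[\Delta^kF(x,\dots)[h]\oplus\Delta^kF(\tilde x,\dots)[\tilde h]]S$. (This is the standard "$\Delta^k$ is itself an nc function" computation of \cite{kvv14}, transported to the operator setting using that $\wF$ already has the intertwining property of Proposition~\ref{prb2}.) Local boundedness of $D^kF(x)[h_1,\dots,h_k]$ follows from Proposition~\ref{prb1}(i): near any $(x_0,h_1^0,\dots,h_k^0)$ the Fréchet expansion of $\wF$ at the relevant operator matrix shows the $(1,k{+}1)$-entry, hence $\Delta^k$, hence $D^kF$, is bounded. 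So $G$ is NC on $\O\times\A^{dk}$.

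For the second claim, if $a\in\O$ is a scalar point then by Theorem~\ref{thmb1} (equivalently the statement of Theorem~\ref{thma1}) the function $h\mapsto D^kF(a)[h_1,\dots,h_k]$ is a homogeneous free polynomial of degree $k$ in $h_1,\dots,h_k$. Any free polynomial in the operator variables $h_1,\dots,h_k$ is intertwining preserving — this is the trivial direction, and it extends the $k$-linear-polynomial case needed here — and is clearly locally bounded (indeed bounded on bounded sets). Hence $D^kF(a)[h_1,\dots,h_k]$ is NC on $\A^{dk}$ (here the domain is all of $\A^{dk}$ rather than an NC domain, but the polynomial makes sense and is intertwining preserving everywhere, and $\A^{dk}$ is closed under the direct sums of Definition~\ref{defram1} via the $U_n$, so "NC" is meaningful).

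The main obstacle is purely bookkeeping: verifying the intertwining-preserving property of $(x,\mathfrak h)\mapsto \Delta^k F(x,\dots,x)[\mathfrak h]$ requires exhibiting the correct permutation unitary $\h^{(k+1)}\otimes(\h\oplus\h)\cong (\h^{(k+1)})\oplus(\h^{(k+1)})$ and checking that conjugating the bidiagonal block matrix by it, together with conjugating each diagonal and superdiagonal block by $S$, produces a block-diagonal operator matrix to which Proposition~\ref{prb2} applies — and then that the $(1,k{+}1)$-entry is extracted compatibly. Once the correct matrices are written down this is mechanical, but it is the only step where a genuine (if routine) identity must be checked rather than quoted; everything else is an immediate appeal to Theorem~\ref{thmb1}, Proposition~\ref{prb1}, Proposition~\ref{prb2}, Lemma~\ref{lemdel}, and Lemma~\ref{lemd3}.
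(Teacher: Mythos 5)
Your argument is correct, but it is considerably longer than the paper's and it inverts the intended dependency structure. For the first assertion the paper simply observes that the difference quotient defining $D^kF$ in \eqref{eqc2} preserves intertwinings: if $L(x,\mathfrak h)=(y,\mathfrak g)L$ then $L(x+\lambda h_k)=(y+\lambda g_k)L$, so by induction on $k$ and passing to the limit, $D^kF$ is intertwining preserving, and local boundedness follows from Fr\'echet holomorphy. Your route through Lemma~\ref{lemdel}, polarization, the $k$-linearity of $\Delta^k$ (Lemma~\ref{lemc5}), and a block-matrix argument reaches the same conclusion but takes several more steps than needed; and even on your own route Lemma~\ref{lemd3} is unnecessary, since for any $L$ with $Lx=yL$ and $Lh_i=g_iL$ the diagonal operator $\mathrm{diag}(L,\dots,L)$ intertwines the two bidiagonal matrices of the form in \eqref{eqc3}, and reading off the $(1,k{+}1)$-entry gives the claim directly. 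For the second assertion the paper uses only the first assertion plus the observation that a scalar point commutes with everything: if $a$ is scalar and $Lh_i=g_iL$, then $L(a,\mathfrak h)=(a,\mathfrak g)L$, whence $L\,D^kF(a)[\mathfrak h]=D^kF(a)[\mathfrak g]\,L$. You instead invoke Theorem~\ref{thmb1}, which is correct but is a heavier tool, and has a structural cost: the remark preceding Theorem~\ref{thmb2} makes clear that Lemma~\ref{lemc4} is intended to be provable independently of Theorem~\ref{thmb1}, so that (given a hypothetical independent proof of Theorem~\ref{thmb2}) one could run the implications the other way and deduce Theorem~\ref{thmb1} from Lemma~\ref{lemc4}; making Lemma~\ref{lemc4} depend on Theorem~\ref{thmb1} forecloses that alternative route.
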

\begin{proof}
The first assertion follows from induction, and the observation that difference quotients preserve intertwining. The second assertion follows from the fact that if $a$ is scalar,
\[
D^k F (a) [ S^{-1}  h_1 S, \dots , S^{-1} h_k S ]
\=
D^k F (S^{-1}a S) [ S^{-1}  h_1 S, \dots , S^{-1} h_k S ].
\]
\end{proof}

\section{Realization formulas}
\label{secf}

One can generalize Example \ref{exama1}.
For $\delta$ a matrix of free polynomials, let
\[
B_\delta( \A) \ = \ \{ x \in \A^d : \| \delta(x) \| < 1 \} .
\]
These sets are all NC  domains. 
If 
\[
\delta(x) \= 
\bbm x^1 & 0 & \dots & 0 \\
0 & x^2 & \dots & 0 \\
&&\ddots \\
0 & 0 & \dots & x^d 
\ebm ,
\]
then $B_\delta( \A)$ is $\mathcal{P}(\A)$ from \eqref{eqa2}. If we set
\[
\delta(x) \= ( x^1 \ x^2 \ \cdots \ x^d) ,
\]
then $B_\delta( \A)$ is $\mathcal{R}(\A)$ from \eqref{eqa3}.

The sets $B_\delta( \A)$ are closed not just under finite direct sums, but countable direct sums, in the following sense.
\begin{definition}
A family $\{ E_k \}_{k=1}^\i$ is an exhaustion of $\O$ if
\begin{enumerate}
\item
$ E_k \subseteq\  {\rm int}(E_{k+1})$ for all $k$;
\item
$\O = \bigcup_{k=1}^\infty E_k $;
\item each $ E_k$ is bounded;
\item each $ E_k$ is closed under countable
direct sums: if $x_j$ is a  sequence in $ E_k$,
then there exists a unitary $U : \h \to \h^{(\i)}$ such that
\be
\label{eqram32}
U^{-1}  \ 
\begin{bmatrix}
x_1 & 0& \cdots & \\
0 & x_2 & \cdots \\
\cdots & \cdots & \ddots
\end{bmatrix}
U \ \in \ E_k .
\ee
\end{enumerate}
\end{definition}

If we set
\[
 E_k \= \{ x \in B_\delta( \A) : \| \delta(x) \| \leq 1 - 1/k,\ {\rm and\ } \| x \| \leq k \},
\]
then $E_k$ is an exhaustion of $B_\delta(\A)$.

We have the following automatic continuity result for NC functions on balanced domains that have an exhaustion.
\begin{theorem}
\label{propf1}
Suppose $\O \subseteq \A^d$ is a balanced NC domain that has an exhaustion $(E_k)$, and $F: \O \to \bh$ is NC
and bounded on each $E_k$.
Suppose for some $k$, there is a sequence $(x_j)$ in $E_k$ that converges to $x \in E_k$ in the strong operator topology.
Then $F(x_j)$ converges to $F(x)$ in the strong operator topology.
\end{theorem}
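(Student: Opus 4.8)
The plan is to show that $F$ is a \emph{uniform} limit of free polynomials on the countable set $\{x,x_1,x_2,\dots\}$, with geometric control on the tails, and then to invoke the elementary fact that free polynomials are sequentially strong‑operator continuous on norm‑bounded sets. \textbf{Step 1: package the sequence into one point.} Write $x_0:=x$. Since $x_0,x_1,x_2,\dots$ all lie in $E_k$ and $E_k$ is closed under countable direct sums, there is a unitary $U:\h\to\h^{(\infty)}$ with
\[
X\ :=\ U^{-1}\big(x_0\oplus x_1\oplus x_2\oplus\cdots\big)\,U\ \in\ E_k\ \subseteq\ \O .
\]
Let $P_j:\h\to\h^{(\infty)}$ be the isometric inclusion onto the $j$-th summand and put $L_j=U^{-1}P_j$. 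A direct computation gives $X^r L_j=L_j x_j^r$ for each coordinate $r$, hence $p(X)L_j=L_j p(x_j)$ for every free polynomial $p$; since $L_j$ is an isometry, $\|p(x_j)\|\le\|p(X)\|$ for all $j\ge0$.

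\textbf{Step 2: a uniform Cauchy estimate at $X$.} By Theorem~\ref{thmd2}, $F=\sum_{n\ge0}p_n$ on $\O$, with each $p_n$ a homogeneous free polynomial of degree $n$. Since $\O$ is open and balanced and $X\in\O$, the set $\{\zeta\in\C:\zeta X\in\O\}$ is open and contains the closed unit disk, so it contains $\D(0,\rho)$ for some $\rho>1$, and $f(\zeta):=F(\zeta X)$ is holomorphic on $\D(0,\rho)$. With $\rho'=(1+\rho)/2$, the compact set $\{\zeta X:\,|\zeta|\le\rho'\}$ is covered by the open sets $\mathrm{int}(E_m)$, hence lies in a single $E_m$, on which $F$ is bounded; set $M=\sup_{E_m}\|F\|$. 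Homogeneity of the $p_n$ gives $f(\zeta)=\sum_n\zeta^n p_n(X)$, so the Cauchy estimates (exactly as in the proof of Theorem~\ref{thmd2}, but applied at $X$) give $\|p_n(X)\|\le M(\rho')^{-n}$. Combining with Step~1, $\|p_n(x_j)\|\le c_n:=M(\rho')^{-n}$ for all $j\ge0$ and all $n$, and $\sum_n c_n<\infty$; hence $\sum_n p_n$ converges to $F$ uniformly over $\{x_0,x_1,x_2,\dots\}$.

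\textbf{Step 3: pass to the limit.} Fix $\xi\in\h$ and $\vare>0$; choose $N$ with $\sum_{n>N}c_n<\vare$ and set $q_N=\sum_{n\le N}p_n$, a free polynomial. A short induction on degree, using that multiplication is jointly strongly continuous on uniformly bounded sets, shows that $q_N$ is sequentially strong‑operator continuous on norm‑bounded sets; since $(x_j)$ lies in the bounded set $E_k$, we get $q_N(x_j)\xi\to q_N(x)\xi$. Therefore, for all large $j$,
\[
\|F(x_j)\xi-F(x)\xi\|\ \le\ \|q_N(x_j)\xi-q_N(x)\xi\|+2\vare\|\xi\|,
\]
and since the first term on the right tends to $0$ while $\vare>0$ was arbitrary, $F(x_j)\xi\to F(x)\xi$; as $\xi$ was arbitrary, $F(x_j)\to F(x)$ in the strong operator topology.

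The one genuinely new ingredient, and the step I expect to be the crux, is Step~1: using closure of $E_k$ under \emph{countable} direct sums to realize the whole sequence inside a single point of $\O$ is exactly what forces the geometric tail bound $\|p_n(x_j)\|\le c_n$ to hold \emph{simultaneously} in $j$. Everything else is standard complex analysis together with the soft continuity of polynomials; in particular, nothing here appeals to the matricial theory, in contrast to the corresponding argument in \cite{amip15}.
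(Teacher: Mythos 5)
Your proof is correct and follows essentially the same route as the paper: pack the sequence into a single point of $\O$ via the countable direct sum property, obtain a uniform-in-$j$ polynomial approximation from the power series expansion of Theorem~\ref{thmd2}, and finish with strong-operator continuity of free polynomials on bounded sets. The only cosmetic difference is that you fold $x_0=x$ into the direct sum and re-derive the Cauchy tail bound at $X$ explicitly, whereas the paper applies Theorem~\ref{thmd2} to the pair $(x,z)$ directly and transfers the bound to each $x_j$ through the block-diagonal identity $p(z)-F(z)=U^{-1}[\oplus(p(x_j)-F(x_j))]U$.
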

\begin{proof}
Let $U : \h \to \h^{(\i)}$ be a unitary so that $ U^{-1} [ \oplus x_j ] U= z \in E_k$.
Let $\Pi_j : \h^\i \to \h$ be projection onto the $j^{\rm th}$ component.
Let $L_j = \Pi_j U$.
Then $L_j z = x_j L_j$. Therefore
$
F( z) \= U^{-1}[ \oplus F(x_j) ]U .
$

Let $v$ be any unit vector, and $\vare > 0$.
By Theorem \ref{thmd2}, there is a free polynomial $p$ so that
$\| p(x) - F(x) \| < \vare/3$ and $\| p(z) - F(z) \| < \vare /3$.
Therefore $\| p(x_j) - F(x_j) \| < \vare/3$ for each $j$. 

Now choose $N$ so that $j \geq N$ implies $\| [ p(x) - p(x_j) ] v \| < \vare/3$, which we can do because
multiplication is continuous on bounded sets in the strong operator topology.
Then we get for $j \geq N$ that
\[
\| [F(x) - F(x_j) ] v \| \ \leq \ \|  F(x) - p(x) \| + \| [ p(x) - p(x_j) ] v \| + \| p(x_j) - F(x_j) \| \ \leq \ \vare .
\]
\end{proof}

\begin{definition}
Let $\delta$ be an $I \times J$ matrix of free polynomials, and $F: B_\delta(\A) \to \bh$.
A realization for $F$ consists of an auxiliary Hilbert space $\M$ 
and an isometry
\be
\label{eqrc2}
  \begin{bmatrix}A&B\\C&D\end{bmatrix} \ : \C \oplus \M^{I} \to \C \oplus \M^{J}
  \ee
  such that for all $x$ in $B_\delta(\A)$
  \be
\label{eqrc3}
F(x) \=  \tensor{A}{1} + \tensor{B}{1} \tensor{1}{\delta(x)} 
\left[ 1 - \tensor{D}{1} \tensor{1}{\delta(x)} \right]^{-1} \tensor{C}{1}.
\ee
\end{definition}
We write tensors vertically in \eqref{eqrc3} just to enhance readability.

In \cite{amip15} it was shown that if $B_\delta( \bh)$ is connected and contains $0$, then every
sequentially strong operator continuous function (in the sense of Theorem \ref{propf1}) NC function
from $B_\delta(\bh)$ that is bounded by $1$ has a realization. The strong operator continuity was needed to 
pass from a realization of $B_\delta$ in the matricial case given in \cite{amfreeII} to a realization for operators.
In light of Proposition \ref{propf1}, though, this hypothesis is automatically fulfilled.
So we get:

\begin{corollary}
\label{thmf2}
Let $\delta$ be an $I \times J$ matrix of free polynomials, and $F: B_\delta(\bh) \to \bh$ satisfy
$\sup \| F (x) \| \leq 1$. Assume that $B_\delta(\bh)$ is balanced.
Then $F$ is NC if and only if it has a realization.
\end{corollary}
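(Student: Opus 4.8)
The plan is to read off both implications from machinery already assembled, so that essentially no new work is required; the forward direction rests on the realization theorem of \cite{amip15}, and the automatic continuity needed to invoke it is supplied by Theorem \ref{propf1}.

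For the direction ``realization $\Rightarrow$ NC'', I would argue as follows. Suppose $F$ is given by \eqref{eqrc3} for some isometric colligation \eqref{eqrc2}. Since the colligation is an isometry and $\|\delta(x)\| < 1$ on $B_\delta(\bh)$, the usual transfer-function computation (Neumann series for $1 - (D\otimes 1)(1\otimes\delta(x))$, followed by the standard estimate for a contractive colligation) shows $\|F(x)\| \le 1$ everywhere, so $F$ is globally, hence locally, bounded. It remains to check that $F$ is intertwining preserving, and this is forced by the form of \eqref{eqrc3}: if $Lx = yL$ then, because $\delta$ is a matrix of free polynomials, the corresponding ampliation $1\otimes L$ intertwines $1\otimes\delta(x)$ and $1\otimes\delta(y)$, and the right-hand side of \eqref{eqrc3} is assembled from $\delta(x)$ only by ampliating, multiplying, adding scalars, and inverting an invertible operator — each operation transports the intertwiner through. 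Hence $F$ is NC. (This half is essentially already contained in \cite{amip15}.)

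For the direction ``NC $\Rightarrow$ realization'', suppose $F$ is NC with $\sup_x \|F(x)\| \le 1$. First I would observe that the hypothesis that $B_\delta(\bh)$ is balanced supplies the geometric hypotheses of the realization theorem of \cite{amip15}: a nonempty balanced set contains $0$ (take the scalar $0$), and it is star-shaped about $0$ (take scalars in $[0,1]$), hence connected. Next, the sets $E_k = \{x \in B_\delta(\bh) : \|\delta(x)\| \le 1 - 1/k,\ \|x\| \le k\}$ form an exhaustion of $B_\delta(\bh)$ in the sense of the definition preceding Theorem \ref{propf1}, and $F$, being bounded by $1$, is bounded on each $E_k$. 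Therefore Theorem \ref{propf1} applies and yields that $F$ is sequentially strong-operator continuous on each $E_k$ — precisely the continuity hypothesis needed in \cite{amip15}. Invoking that theorem now produces an auxiliary space $\M$ and an isometric colligation \eqref{eqrc2} for which \eqref{eqrc3} holds, completing the proof.

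The points I expect to require genuine care are bookkeeping rather than substance: checking that $(E_k)$ really is an exhaustion of $B_\delta(\bh)$ — in particular that each $E_k$ is closed under countable direct sums, which uses that $\delta$ is a matrix of free polynomials (so that $\|\delta(\oplus x_j)\| = \sup_j \|\delta(x_j)\|$ after the unitary conjugation furnished by \eqref{eqa1}) together with the direct-sum property of $\bh$ — and confirming that the notion of sequential strong-operator continuity delivered by Theorem \ref{propf1} coincides verbatim with the one assumed in \cite{amip15}. Neither is deep, but they are the joints at which the argument is glued together, and they are where I would spend the writeup.
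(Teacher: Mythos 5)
Your proposal matches the paper's argument exactly: the paper also derives the corollary by noting that the balanced hypothesis supplies $0\in B_\delta(\bh)$ and connectedness, that the sets $E_k$ form an exhaustion of $B_\delta(\bh)$, and that Theorem \ref{propf1} therefore furnishes the sequential strong-operator continuity needed to invoke the realization theorem of \cite{amip15}. The paper compresses all of this into the single remark that ``in light of Proposition \ref{propf1}, though, this hypothesis is automatically fulfilled,'' so your write-up is simply a more explicit rendering of the same route.
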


As another consequence, we get that every bounded non-commutative function on $B_\delta ({\mathbb M})$
(by which we mean $\{ x \in \md :  \| \delta( x) \| < 1 \}$),  has a 
unique extension to an NC function on $B_\delta(\bh)$, where we embed $\md$ into $\bh^d$ by
choosing a basis of $\h$ and 
identifying an $n$-by-$n$ matrix with the finite rank operator that is $0$ outside the first $n$-by-$n$ block.

\begin{corollary}
\label{corf3}
Assume $B_\delta(\bh)$ is balanced. Then every non-commutative bounded function $f$ on 
$B_\delta ({\mathbb M})$ has a 
unique extension to an NC function on $B_\delta(\bh)$.
\end{corollary}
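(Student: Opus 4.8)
The plan is to establish existence and uniqueness separately, with the realization machinery from \cite{amfreeII} and \cite{amip15} doing the heavy lifting in the matricial case and Corollary \ref{thmf2} (together with Theorem \ref{propf1}) carrying the result to $B_\delta(\bh)$.

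\textbf{Existence.} First I would reduce to the bounded-by-$1$ case by scaling: if $f$ is a bounded nc function on $B_\delta({\mathbb M})$, replace $f$ by $f/M$ where $M = \sup\|f\|$, prove the result, and scale back. Since $f$ is a bounded nc function on $B_\delta({\mathbb M})$, the matricial realization theory (e.g. \cite{amfreeII}) produces an auxiliary Hilbert space $\M$ and an isometry $\bbm A & B \\ C & D \ebm$ as in \eqref{eqrc2} such that \eqref{eqrc3} holds for every $x \in B_\delta({\mathbb M})$. Now I would simply \emph{define} $\wF$ on $B_\delta(\bh)$ by the same formula \eqref{eqrc3}: the expression $\tensor{A}{1} + \tensor{B}{1}\tensor{1}{\delta(x)}[1 - \tensor{D}{1}\tensor{1}{\delta(x)}]^{-1}\tensor{C}{1}$ makes sense for any $x \in B_\delta(\bh)$ because $\|\delta(x)\| < 1$ forces the resolvent to converge in norm. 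One then checks that $\wF$ so defined is bounded by $1$ and NC on $B_\delta(\bh)$ — but this is exactly the ``if'' direction of Corollary \ref{thmf2} (a function with a realization is NC), so no new work is needed. Finally I would verify that $\wF$ restricts to $f$ on $B_\delta({\mathbb M})$: on the finite-rank operators that represent $n$-by-$n$ matrices, $\wF$ evaluates to the same formula that gave $f$ in the matricial theory, so $\wF|_{B_\delta({\mathbb M})} = f$.

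\textbf{Uniqueness.} Suppose $G_1, G_2$ are two NC functions on $B_\delta(\bh)$ agreeing with $f$ on $B_\delta({\mathbb M})$. Set $H = G_1 - G_2$; this is an NC function (a difference of NC functions is NC, being intertwining preserving and locally bounded) vanishing on $B_\delta({\mathbb M})$. I want to conclude $H \equiv 0$. The key point is that every $x \in B_\delta(\bh)$ can be approximated in the strong operator topology by finite-rank truncations $x_j$ (compress $x$ to the first $j$ basis vectors), and for $j$ large these truncations lie in $B_\delta({\mathbb M}) \cap E_k$ for a fixed exhaustion level $k$ — this uses that $B_\delta$ is defined by $\|\delta(x)\|<1$ together with the exhaustion $E_k = \{x : \|\delta(x)\| \le 1-1/k,\ \|x\| \le k\}$, and the fact that compressions do not increase the relevant norms. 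By Theorem \ref{propf1}, $H(x_j) \to H(x)$ in the strong operator topology; since each $H(x_j) = 0$, we get $H(x) = 0$. Hence $G_1 = G_2$.

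\textbf{Main obstacle.} The delicate step is the approximation in the uniqueness argument: I need that the finite-rank compressions $x_j$ of an arbitrary $x \in B_\delta(\bh)$ (i) converge to $x$ strongly, (ii) eventually land in $B_\delta({\mathbb M})$ with room to spare, and (iii) stay inside a single $E_k$. Part (i) is standard, but (ii)–(iii) require a small argument that $\|\delta(P_j x P_j)\|$ does not exceed $\|\delta(x)\|$ by much — for the basic cases $\mathcal{P}$ and $\mathcal{R}$ this is immediate because compression decreases operator norm and the row norm, but for general $\delta$ one must observe that $\delta(P_j x P_j)$ is itself (a compression of) a polynomial expression and control it; since $P_j x P_j \to x$ in SOT and $\delta$ is a matrix of polynomials hence SOT-continuous on bounded sets, $\|\delta(P_j x P_j)\| \to \|\delta(x)\| < 1$ isn't automatic but $\limsup_j \|\delta(P_j x P_j)\| \le \|\delta(x)\|$ can be arranged, which suffices. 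I also want to be careful that ``$f$ is a non-commutative bounded function on $B_\delta({\mathbb M})$'' is interpreted so that the matricial realization theorem of \cite{amfreeII} genuinely applies (connectedness of $B_\delta({\mathbb M})$ containing $0$, which follows from $B_\delta$ being balanced).
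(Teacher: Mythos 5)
Your existence argument matches what the paper leaves implicit (the matricial realization theorem plus the ``if'' direction of Corollary \ref{thmf2}), and that part is fine. The uniqueness argument, however, differs from the paper's and contains a genuine gap that the paper's approach is designed to avoid.

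You try to prove uniqueness by showing that every $x \in B_\delta(\bh)$ is an SOT limit of finite-rank truncations $P_j x P_j$ that \emph{stay inside} $B_\delta({\mathbb M})$, and then invoking Theorem \ref{propf1} directly at $x$. As you yourself flag in the ``Main obstacle'' paragraph, this requires controlling $\|\delta(P_j x P_j)\|$, which compressions do control for $\mathcal{P}$ and $\mathcal{R}$ but not for a general matrix of free polynomials $\delta$; SOT convergence $\delta(P_jxP_j)\to\delta(x)$ gives only $\|\delta(x)\|\leq\liminf_j\|\delta(P_jxP_j)\|$, the wrong inequality, and your claimed $\limsup_j \|\delta(P_j x P_j)\| \le \|\delta(x)\|$ is not established and has no reason to hold in general. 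So your proof, as written, only covers special $\delta$.

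The paper's proof sidesteps this entirely with an analytic continuation step. It does not attempt to approximate an arbitrary $x \in B_\delta(\bh)$. Instead it chooses $r>0$ with $r\mathcal{P}(\bh)\subseteq B_\delta(\bh)$ (possible since $0\in B_\delta(\bh)$ and $\delta$ is continuous); on the small polydisk $r\mathcal{P}(\bh)$ compressions trivially stay in $r\mathcal{P}({\mathbb M})$, so Theorem \ref{propf1} gives $F_1-F_2=0$ there; and then, because $F_1-F_2$ is holomorphic and $B_\delta(\bh)$ is connected (it is balanced, hence path-connected through $0$), the vanishing on an open set propagates to all of $B_\delta(\bh)$. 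That single appeal to the identity principle is the idea your argument is missing, and it is what makes the result hold for general balanced $B_\delta$ rather than just for $\mathcal{P}$ and $\mathcal{R}$.
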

\begin{proof}
Suppose $F_1$ and $F_2$ are both extensions of $f$, and let $F = F_1 - F_2$.
As $0 \in B_\delta(\bh)$ and $\delta$ is continuous, there exists $r > 0$ so that
$r \mathcal{P}(\bh) \subseteq B_\delta(\bh)$.

Let $x \in r \mathcal{P}(\bh)$. Then  there exists a sequence $(x_j)$ in
$r \mathcal{P}({\mathbb M})$ that converges to $x$ in the strong operator topology.
As $F(x_j) = 0$ for each $j$, by Theorem \ref{propf1} we get $F(x) = 0$.
Therefore $F$ vanishes on an open subset of $B_\delta(\bh)$. 
As $F$ is holomorphic, and $B_\delta(\bh)$ is connected, we conclude that $F$ is identically zero.
\end{proof}

\begin{question}
Are the previous results true if $B_\delta(\bh)$ is not balanced?
\end{question}

If one has a realization formula (Equation~\ref{eqrc3}) for $B_\delta(\A)$, then it automatically extends to 
$B_\delta(\bh)$. We do not know how different choices of algebra 
 $\A_1$ and $\A_2$  satisfying 
\eqref{eqa1} affect the set of NC functions on their balls.




\end{document}